\numberwithin{equation}{section}
\newcommand{\PP}{\mathbb{P}}
\newcommand{\bL}{\mathbf{L}}
\newcommand{\bk}{\mathbf{k}}
\newcommand{\kk}{\bk}
\newcommand{\bP}{\mathbf{P}}
\newcommand{\cal}{\mathcal}
\def\cA{{\cal A}}
\def\cX{{\cal X}}
\def\Eb{E^\bullet}
\def\sE{{\mathscr E}}
\def\sEb{\sE^\bullet}
\def\sF{{\mathscr F}}
\def\sFb{\sF^\bullet}
\def\sG{{\mathscr G}}
\def\sH{{\mathscr H}}
\def\sI{{\mathscr I}}
\def\sJ{{\mathscr J}}
\def\sL{{\mathscr L}}
\def\sO{{\mathscr O}}
\def\fM{\mathfrak{M}}
\def\2M{M}
\def\wfM{\widetilde{\2M}}
\def\ff{\mathfrak{f}}
\def\sta{^\ast}
\def\sta{^{\ast}}
\def\sta{^*}
\def\lra{\longrightarrow}
\def\lsta{_{\ast}}
\newcommand{\Ga}{\Gamma}
\newcommand{\ee}{{\bf e}}
\def\begeq{\begin{equation}}
\def\endeq{\end{equation}}
\def\and{\quad{\rm and}\quad}
\def\defeq{:=}
\def\sub{\subset}
\def\and{\quad\text{and}\quad}
  \DeclareMathOperator{\Hom}{Hom}
\DeclareMathOperator{\rank}{rank}
\let\lab=\label
\newtheorem{prop}{Proposition}[section]
\newtheorem{theo}[prop]{Theorem}
\newtheorem{coro}[prop]{Corollary}
\newtheorem{defi}[prop]{Definition}
\newtheorem{conj}[prop]{Conjecture}
\theoremstyle{definition}
\newtheorem{say}[prop]{}
\newtheorem{rema}[prop]{Remark}
\DeclareMathOperator{\coker}{coker}
\def\Po{{\mathbb P^1}}
\def\Pf{{\mathbb P}^4}
\def\Pn{{\mathbb P}^n}
\def\PP{{\mathbb P}}
\def\bP{{\bf P}}
\def\fM{\mathfrak{M}}
\def\fMPdgn{\overline{\fM}_{g,n}(\PP^m,d)}
\def\wX{\widetilde{\cX}}
\def\sta{^\ast}
\let\lab=\label
\def\lab#1{\label{#1}[{#1}]\  }
\def\lab{\label} 
\def\beq{\begin{equation}}
\def\eeq{\end{equation}}
\def\lab{\label}
\title[Primary Enumerative Invariants]
{Derived Resolution Property for Stacks, Euler Classes and
Applications}
\author{Yi Hu}
\address{Department of Mathematics, University of Arizona, USA.}
\email{yhu@math.arizona.edu}
\author{Jun Li}
\address{Department of Mathematics, Stanford University, USA.}
\email{jli@math.stanford.edu}  
\date{}
\begin{document}
\maketitle

\begin{abstract}
By resolving any perfect derived object over a
Deligne-Mumford stack, we define its Euler class.
We then apply it to define the Euler numbers for a smooth Calabi-Yau threefold in $\Pf$.
These numbers are conjectured to be  the reduced Gromov-Witten invariants
and to determine the usual Gromov-Witten numbers of the smooth
quintic as speculated by J. Li and A. Zinger.
\end{abstract}


\section{Introduction}

Let $\overline{\fM}_g(\bP,d)$ be the DM stack of degree $d$ genus-$g$ stable maps to a projective space $\bP$. We let
\beq\begin{CD} { \cX} @>{\pi}>> \overline{\fM}_g(\bP,d) \and { \cX} @>{\ff}>>
\bP
\end{CD}\eeq
be its universal family. For any integer
$k>0$,  the derived object $R \pi_* (\ff^*
\sO_{{\PP^m}}(k))$ is quasi-isomorphic to a complex of locally free sheaves
$$\begin{CD} \sE^\bullet=[\sE_0 @>{\varphi}>> \sE_1].
\end{CD}$$
The main purpose of this article is to define the Euler class
$$\ee(R \pi_*( \ff^*\sO_{{{\bP}}}(k))$$
of this complex
over the primary component of $\overline{\fM}_g(\bP,d)$.

In fact, we consider any perfect derived object $\sEb$ in the
 bounded derived category  ${\rm D}^b(M)$ of an
integral DM stack $M$ with  cohomologies concentrated in the non-negative
places. Our main theorem
says that any such a perfect derived object $\sEb$, such as $R \pi_* (f^* \sO_{{\PP^m}}(k))$
in the above, can be resolved to have locally free sheaf cohomology
$\sH^0$ after birational base change.

\begin{theo}\lab{resolution} {\rm (Existence of Resolution.)}
Let $\sEb$ be any perfect derived object over an integral DM
stack $M$.  Assume that $\sEb$ can  locally be represented 
by a complex of locally free sheaves of finite length supported
only in non-negative degrees. 
Then there is another
integral DM stack $\wfM$ and a surjective birational
morphism $f: \wfM\to \2M$ such that  $\sH^0(\bL f\sta \sEb)$ is
locally free.
\end{theo}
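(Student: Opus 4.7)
My plan is to reduce the theorem to an application of Raynaud--Gruson flattening, applied to the image of the first differential of a local representation of $\sEb$.

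Working \'etale-locally on $M$, I represent $\sEb$ by a complex of locally free sheaves $[\sE_0 \xrightarrow{\varphi} \sE_1 \to \sE_2 \to \cdots]$ supported in non-negative degrees. Then $\bL f\sta\sEb$ is represented termwise by the ordinary pullback, so
$$\sH^0(\bL f\sta\sEb) \;=\; \ker(f\sta\varphi),$$
and the problem reduces to making $\ker(f\sta\varphi)$ locally free. Since $f\sta\sE_0$ is locally free, the short exact sequence $0 \to \ker(f\sta\varphi) \to f\sta\sE_0 \to \image(f\sta\varphi) \to 0$ splits locally as soon as $\image(f\sta\varphi)$ is locally free, in which case $\ker(f\sta\varphi)$ is a summand of a locally free sheaf. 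Hence it suffices to arrange local freeness of $\image(f\sta\varphi)$.

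The key identification is that $\image(f\sta\varphi)$ equals the strict transform of $\image(\varphi)$ under $f$. From the exact sequence $0 \to \image(\varphi) \to \sE_1 \to \coker(\varphi) \to 0$, the long exact sequence for $\bL f\sta$ gives
$$\Tor_1^{\cO_M}(\cO_{\wfM},\coker(\varphi)) \;\to\; f\sta\image(\varphi) \;\to\; f\sta\sE_1,$$
and the kernel of the right arrow is precisely the torsion subsheaf of $f\sta\image(\varphi)$, since $f\sta\sE_1$ is torsion-free. Thus $\image(f\sta\varphi) = f\sta\image(\varphi)/(\text{torsion})$ is the strict transform. I then invoke the Raynaud--Gruson flattening theorem, extended to integral DM stacks via \'etale descent: there exists a blow-up $f:\wfM\to M$ with center a proper closed substack of $M$ such that the strict transform of $\image(\varphi)$ is flat, hence locally free. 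Surjectivity and birationality of $f$ follow from the center being a proper closed substack of the integral stack $M$.

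The main obstacle is globalization, since $\image(\varphi)$ a priori depends on the chosen local representation. I plan to address this by defining the blow-up center intrinsically. When $\sEb$ has two terms, $\coker(\varphi) = \sH^1(\sEb)$ is intrinsic and its Fitting ideals glue to a canonical global ideal sheaf on $M$, whose blow-up effects the flattening. For longer complexes I would induct on the length using a distinguished triangle splitting off the top term, exploiting that $\sH^0(\bL f\sta\sEb)$ is unaffected by modifying terms in degrees $\geq 2$. If a single Raynaud--Gruson blow-up is insufficient, a Noetherian induction on the non-locally-free locus of $\sH^0(\sEb)$, which strictly shrinks after each blow-up of the Fitting-ideal-defined center, terminates in finitely many steps and yields the required surjective birational morphism $f:\wfM\to M$.
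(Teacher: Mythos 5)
Your reduction to the local freeness of $\operatorname{Im}(f\sta\psi_0)$, via the torsion-free quotient of $f\sta\operatorname{Im}(\psi_0)$, is correct, and so is the instinct that globalization must pass through Fitting ideals of intrinsic cohomology sheaves. But the globalization step has a genuine gap once the local complex has more than two terms. When $\sEb=[\sF_0\to\sF_1\to\cdots\to\sF_n]$ with $n\geq 2$, $\coker\psi_0$ is \emph{not} $\sH^1(\sEb)$ — it is a term of a brutal truncation that is not quasi-isomorphism invariant — so its Fitting ideals do not glue to a global ideal on $M$. Your proposed fix (``split off the top term'' plus ``$\sH^0$ is unaffected by terms in degree $\geq 2$'') begs the question: the brutal truncation $[\sF_0\to\sF_1]$ is exactly the non-intrinsic local datum you are trying to globalize, and the canonical truncation $\tau_{\leq 1}\sEb$ need not admit a global two-term locally free representative. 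The paper closes this gap by inducting \emph{from the top of the complex}: it blows up the Fitting ideals of $\sH^n(\sEb)=\coker\psi_{n-1}$, which \emph{is} intrinsic, thereby making $\ker\psi_{n-1}$ locally free; then it uses the new presentation $\sF_{n-2}\to\ker\psi_{n-1}\to\sH^{n-1}\to 0$ to blow up the Fitting ideals of $\sH^{n-1}(\bL f\sta\sEb)$, and so on down to $\sH^1$, after which $\ker\psi_0=\sH^0(\bL f\sta\sEb)$ is automatically locally free. This top-down induction is the missing ingredient in your proposal.

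Two further cautions. Raynaud--Gruson is a theorem about flatness of a sheaf relative to a base morphism; what you actually need is flattening a coherent sheaf by blowing up the scheme it lives on (Rossi, Oneto--Zatini), and carrying such an abstractly produced blowup through \'etale descent to the stack is not automatic — whereas Fitting-ideal blowups descend precisely because Fitting ideals are intrinsic and commute with base change. Finally, the termination mechanism in the paper is not Noetherian induction on the non-locally-free locus of $\sH^0$, which a single Fitting-ideal blowup need not strictly shrink; instead Theorem~\ref{determ} constructs an explicit \emph{iterated} determinantal blowup, of length equal to the generic rank $m$ of the differential, achieving the strictly stronger property of local diagonalization, from which local freeness of the kernel on any irreducible component follows, and which moreover makes all the determinantal ideals simultaneously invertible (needed later for the universality statement).
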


Moreover, among the above stacks $\wfM$, there is
also a unique one (up to isomorphism) that is  minimal (see Theorem
\ref{resolution-restated} and Proposition \ref{stack:universality}
for the precise statements). In the main text, we will prove the stronger Theorem
\ref{resolution-restated} from which the above  is a consequence.
The main technical theorem for this purpose is Theorem
\ref{determ}.

\begin{defi}
Suppose further that the cohomologies in positive places
$\sH^{i>0}(\sEb)$ are all torsion sheaves  over $M$.
Then, we define the Euler class ${ \ee}(\sEb)$ in the Chow group $A_* M$  of cycles on
$\2M$ by, \beq { \ee}(\sEb) := f_*(c_r(\sH^0(L f\sta \sEb)) \cdot
[\wfM]),\eeq  where $r=\rank  \sH^0(L f\sta \sEb)$.
\end{defi}

\begin{prop}  Let $M$ be an integral DM stack 
and $\sEb$ a perfect derived object as in Theorem
\ref{resolution}. Suppose $\sH^{i<0}(\sEb)=0$
 and $\sH^{i>0}(\sEb)$ are torsion sheaves. Then
the Euler class ${ \ee}(\sEb)$ is independent of the choice of the
resolutions $f: \wfM\to \2M$.
\end{prop}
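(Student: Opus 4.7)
The plan is to reduce, via a common refinement of resolutions, to the problem of comparing the Euler class under a further birational modification. Given two resolutions $f_i \colon \wfM_i \to M$ for $i=1,2$, I first construct a third resolution $f_3 \colon \wfM_3 \to M$ dominating both: take an irreducible component $N$ of the fiber product $\wfM_1 \times_M \wfM_2$ mapping birationally onto $M$, pass to its reduction to obtain an integral DM stack, and apply Theorem \ref{resolution} to the derived pullback of $\sEb$ on $N$ to produce a resolution $\wfM_3 \to N$. Composition then yields birational surjective morphisms $g_i \colon \wfM_3 \to \wfM_i$ satisfying $f_i \circ g_i = f_3$. Hence it suffices to prove that the Euler class is unchanged under a single further birational refinement.

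So reduced, let $g \colon \wfM' \to \wfM$ be a birational surjection of integral DM stacks where both $\wfM$ and $\wfM'$ are resolutions (for $f$ and $f \circ g$ respectively), and set $E = \sH^0(\bL f\sta \sEb)$ and $E' = \sH^0(\bL(f \circ g)\sta \sEb)$, both locally free of rank $r$. By the projection formula and the identity $g_*[\wfM'] = [\wfM]$ for a birational morphism of integral stacks, the desired equality in $A_* M$ reduces to showing $g_*(c_r(E') \cdot [\wfM']) = c_r(E) \cdot [\wfM]$ in the Chow group of $\wfM$. Choose a two-term locally free representative $[\sF_0 \to \sF_1]$ of $\bL f\sta \sEb$, so that $E = \ker(\sF_0 \to \sF_1)$ and $E' = \ker(g\sta \sF_0 \to g\sta \sF_1)$. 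The pullback of the inclusion $E \hookrightarrow \sF_0$ gives a canonical morphism $g\sta E \to E'$ which is an isomorphism over the dense open where $g$ is an isomorphism. If one can show that $g\sta E \to E'$ is a global isomorphism, then $c_r(E') = g\sta c_r(E)$, and the projection formula completes the argument via $g_*(g\sta c_r(E) \cdot [\wfM']) = c_r(E) \cdot [\wfM]$.

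The main obstacle is precisely the identification $g\sta E \cong E'$. The difficulty is that $\image(\sF_0 \to \sF_1) = \sF_0/E$ need not be locally free even when $E$ is, so the derived pullback by $g$ can a priori contribute a nontrivial $\Tor_1^{\sO_\wfM}(\sO_{\wfM'}, \sF_0/E)$ term enlarging the kernel $E'$ over the exceptional locus of $g$. A promising approach is to invoke the minimality and universality of the resolution from Theorem \ref{resolution-restated} and Proposition \ref{stack:universality}: factor both $\wfM$ and $\wfM'$ through the unique minimal resolution $\wfM_{\min}$ and reduce the comparison to that canonical case, where the universal property should directly force the identification. A more direct route, which I expect to be the technically hardest step, is to analyze the relevant $\Tor$ sheaf and show its vanishing by exploiting both the local-freeness of $E$ on $\wfM$ and of $E'$ on $\wfM'$ simultaneously — the combination of these two hypotheses should suffice to rule out additional kernel contributions under $\bL g\sta$.
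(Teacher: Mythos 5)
Your outline matches the paper's (Propositions \ref{baseChange} and \ref{independent}): pass to a common refinement --- the paper uses the graph of the rational map $N \dashrightarrow \wfM$, you use a component of the fiber product, which is the same thing --- establish that $\sH^0$ of the derived pullback to the refinement is the ordinary pullback of $\sH^0$ from each side, and finish with $g_*[\wfM'] = [\wfM]$ plus the projection formula. You also correctly isolate the technical heart as a potential $\Tor_1(\sO_{\wfM'},\sF_0/E)$ contribution enlarging the kernel.

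What the proposal does not do is close that gap, and the expectation in your last sentence --- that local freeness of $E$ on $\wfM$ together with local freeness of $E'$ on $\wfM'$ forces $g\sta E \cong E'$ --- is actually false. Take $M = \AA^2$ with $\sEb$ the two-term complex $\sO^{\oplus 2}\to \sO$ given by $(a,b)\mapsto xa+yb$. Then $E=\ker$ is free of rank one, generated by $(y,-x)$. Pull back along $g\colon \Bl_0\AA^2 \to \AA^2$; in the chart $x=u$, $y=uv$ the pulled-back map is $(a,b)\mapsto ua+uvb$, whose kernel is generated by $(-v,1)$, whereas $g\sta E$ is generated by $(uv,-u)=-u\,(-v,1)$. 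Both sheaves are invertible, yet $g\sta E$ is a proper subsheaf of the kernel, with torsion quotient on the exceptional divisor. So local freeness of the two kernels is not the right hypothesis. The ingredient you are missing, and the one Proposition \ref{baseChange} actually uses, is that Theorem \ref{resolution-restated} produces $f$ with $f\sta\psi_0$ locally \emph{diagonalizable}, not merely with locally free kernel. Over an integral stack, a diagonalizable map $\mathrm{diag}(p_1 I_1,\dots,p_l I_l,0)$ with each $p_i\neq 0$ has both kernel $G_0$ and image $\bigoplus_i p_i H_i$ locally free, so $0\to \ker f\sta\psi_0 \to f\sta\sF_0 \to \image f\sta\psi_0 \to 0$ is a short exact sequence of locally free sheaves and stays exact under any further pullback; and because $g$ is dominant and $\wfM'$ integral, the $g\sta p_i$ remain non-zero-divisors, which gives $\ker g\sta f\sta\psi_0 = g\sta \ker f\sta\psi_0$. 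With this one fact in hand, your second paragraph becomes rigorous and the projection-formula computation finishes exactly as you planned. Note also that this identity only comes for free in the direction away from a diagonalizing resolution; your instinct to route the comparison through a common refinement on which diagonalizability is available on both sides is the correct way to make the argument symmetric.
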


For more precise statements, see Proposition \ref{independent} and Corollary \ref{Euler}.

\medskip

{
The above, when applied to the derived object
$R \pi_* \ff^* \sO_{\PP^4}(5)$ restricted to
the primary component $\overline{\fM}_g(\PP^4, d)'$
 of the moduli stack
$\overline{\fM}_g(\PP^4,d)$, enables us to construct the modular Euler class when $d > 2g-2$.
(The general points of $\overline{\fM}_g(\PP^4,d)'$ are
maps with smooth domains, and
$\overline{\fM}_g(\PP^4,d)'$ is irreducible and of the expected dimension when
$d >2g-2$. cf. \ref{primaryComp}.)  In this case, letting $\ff'$ be the restriction of the universal family $\ff$
to $\overline{\fM}_g(\PP^4,d)'$, then $R^1 \pi_* \ff^{\prime\ast} \sO_{\PP^4}(5)$
is a torsion sheaf over $\overline{\fM}_g(\PP^4,d)'$.

\begin{defi} For $d> 2g-2$, we define the modular Euler class of $R \pi_* \ff^* \sO_{\PP^4}(5)$
over $\overline{\fM}_g(\PP^4,d)'$ to be
\beq { \ee}(R \pi_* \ff^{\prime\ast}
\sO_{\PP^4}(5)) \in A_* (\overline{\fM}_g(\PP^4,d)');
\eeq
for any smooth Calabi-Yau manifold $Q$ in $\PP^4$, we define
\beq N_{g,d}'(Q) = \deg \ee(R \pi_* \ff^{\prime\ast} \sO_{\PP^4}(5)).
\eeq
\end{defi}

We believe that these numbers $N_{g,d}'(Q)$ are the reduced GW-invariants speculated by Li and Zinger:

\begin{conj}\lab{conj}  Let $N_{g,d}(Q)$ be the genus $g$, degree $d$ GW-invariants of the smooth
quintic $Q\sub\PP^4$.
There are universal constants $c_h$
such that for  $d > 2g-2$,

$$N_{g,d}(Q)= \sum_{0 \le h \le g} c_h N_{h,d}'(Q).$$
\end{conj}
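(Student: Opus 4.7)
The plan is to decompose the moduli stack $\overline{\fM}_g(\PP^4,d)$ into its primary component $\overline{\fM}_g(\PP^4,d)'$ and auxiliary components, evaluate the contribution of each to the GW integral $N_{g,d}(Q) = \int_{[\overline{\cM}_g(Q,d)]^{\virt}} 1$, and identify each such contribution with a universal constant multiple of $N_{h,d}'(Q)$ for an appropriate $h \le g$. For $d > 2g-2$, the auxiliary components of $\overline{\fM}_g(\PP^4,d)$ are indexed by stable dual graphs in which a distinguished subgraph of arithmetic genus $h<g$ carries the full degree $d$, while the complementary \emph{ghost} subgraphs of total genus $g-h$ are contracted to points by the universal map $\ff$.

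For the contribution of the primary component, I would combine the resolution $\wfM \to \overline{\fM}_g(\PP^4,d)'$ provided by Theorem \ref{resolution} with the virtual cycle formalism on $\overline{\cM}_g(Q,d)$. Restricting the virtual cycle to a neighborhood of the primary component and pushing forward should, after cancelling excess contributions absorbed by the resolution, produce a universal scalar $c_g$ times the modular Euler class $\ee(R \pi_* \ff^{\prime\ast}\sO_{\PP^4}(5))$, yielding the primary term $c_g N_{g,d}'(Q)$.

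For each auxiliary component, since the ghost components are contracted by $\ff$, the stable map factors through a lower-genus map of degree $d$; the auxiliary stratum thus fibers over a boundary stratum of $\overline{\fM}_h(\PP^4,d)'$ with fiber a product of moduli of contracted stable curves with marked attachment points. A local analysis of $R\pi_*\ff^*\sO_{\PP^4}(5)$ along this stratum, combined with an integration along the fiber on the ghost moduli (which produces pure Hodge-type integrals on $\overline{\cM}_{g-h, n}$), should express the contribution as a universal topological constant times $N_{h,d}'(Q)$. Summing these topological constants over all auxiliary graphs of \emph{main} genus $h$ defines $c_h$, and universality is manifest since the fiber computation involves only moduli of curves, not $Q$ or $d$.

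The main obstacle is the auxiliary contribution step. Making it rigorous requires (i) constructing a compatible global resolution of $\overline{\fM}_g(\PP^4,d)$ that simultaneously resolves $R\pi_*\ff^*\sO_{\PP^4}(5)$ along the primary and all auxiliary components — an iterated application and refinement of Theorem \ref{determ} beyond what Theorem \ref{resolution} currently supplies for a single component; (ii) a precise local description of the obstruction bundle near a stratum where primary and auxiliary components meet, ensuring that the ghost and main contributions genuinely separate after pushforward and do not produce cross-terms depending on $Q$ or $d$; and (iii) verifying that all remaining dependence on $d$ and $Q$ on each auxiliary stratum is captured by $N_{h,d}'(Q)$. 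A natural inductive approach on $g$, following the template of the genus-one and genus-two computations of J.~Li and A.~Zinger, appears most promising, but extending to arbitrary $g$ will require substantial new structural input about the auxiliary components for large $g$.
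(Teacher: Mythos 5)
This statement is a \emph{conjecture} in the paper (Conjecture \ref{conj}); the paper offers no proof of it, so there is no argument of the authors to compare yours against. The conjecture is attributed to the speculation of Li and Zinger and is verified in the literature only in genus zero (where it is trivial, $c_0=1$) and genus one (\cite{LZ}, \cite{VZ}, \cite{CL}), with genus two then in progress. You should therefore not present your text as a proof; at best it is a program.

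As a program, your outline is sensible and is broadly in the spirit of the genus-one analysis, but the gaps you flag in your final paragraph are not minor technical points to be filled in later — they are precisely where the mathematical content of the conjecture lies, and the paper supplies none of it. In particular: (i) Theorem \ref{resolution} and the Euler-class machinery of \S\ref{Euler} only operate on the \emph{integral} primary component $\fXg'$; nothing in the paper produces a resolution of $R\pi_*\ff^*\sO_{\PP^4}(5)$ over the full, reducible stack $\fXg$, nor relates $[\overline{\cM}_g(Q,d)]^{\vir}$ to the modular Euler class on $\fXg'$, so even the primary term $c_g N_{g,d}'(Q)$ is not established. (ii) Your claim that the auxiliary contributions ``genuinely separate after pushforward and do not produce cross-terms depending on $Q$ or $d$'' is exactly the assertion that the constants $c_h$ are universal; asserting it is circular. (iii) The description of the auxiliary components as ghost curves attached to a degree-$d$ lower-genus core is only a heuristic picture; already for $g=1$ the correct statement (Vakil--Zinger, \cite{VZ}) required a nontrivial desingularization, and for general $g$ the structure of the boundary strata and the local form of $R\pi_*\ff^*\sO_{\PP^4}(5)$ near them is unknown. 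In short, your proposal correctly identifies the shape a proof would likely take and correctly identifies the obstacles, but it does not close any of them; the statement remains a conjecture.
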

}

\medskip
While this paper being prepared, the first author was partially supported by NSA grant MSP07G-112  and NSF DMS 0901136;
the second author was partially supported by NSF DMS-0601002.

\section{Conventions and Terminology}
\lab{pre}


\begin{say}
Throughout the paper, we fix an arbitrary algebraically closed
base field $\kk$ of characteristic 0.  All schemes and stacks in this paper
are assumed to be noetherian over $\kk$.
\end{say}

\begin{say}
Paragraphs are enumerated, so are equations. For instance, 3.2
refers to Paragraph 3.2, while (3.2) refers to Equation (3.2). \S
3.2 refers to a subsection.
\end{say}

\begin{say}\lab{smoothness} Unless otherwise stated,
smoothness is in the sense of DM stack or Artin stack. On the
coarse moduli level, this roughly means that the moduli space  is
locally in the \'etale topology a quotient of a smooth variety by
a finite group or in the topology of smooth morphisms a quotient
of a smooth variety by a group scheme.
\end{say}

\begin{say} All morphisms between stacks are assumed to be
representable.
\end{say}

\begin{say}
Let $X$ be a scheme, $D$ a Cartier divisor of $X$ and $Z$ a closed
subscheme of $X$. We will write $\sO_Z(D)$ for the restriction
$\sO_X(D)|_Z$.
\end{say}

\begin{say}
For any right exact functor $F$ such as $f^*$ from an Abelian
category $\mathscr A$ to another $\mathscr B$, we use $LF$ to
denote the left derived functor. Similarly, for a left exact
functor $F$ such as $f_*$, $R F$ is the right derived
functor.
\end{say}

\section{Diagonalizing Sheaf Homomorphism}
\lab{determinantalBlowup}

\begin{say}
Let $\2M$ be a DM-stack. We denote by ${\rm D}^b(\2M)$ the derived
category of bounded complexes of coherent sheaves over $\2M$.  An
object $\sEb\in {\rm D}^b(\2M)$ is called {\it perfect} if locally
it can be represented by a complex of locally free sheaves of
finite length. Equivalently, the stack $\2M$
admits an \'etale cover $U$ by a scheme such that
there is a finite length complex $\sF^\bullet$ of locally
free sheaves over $U$ such that $\sEb$ is represented by
$\sF^\bullet$ in ${\rm D}^b(U)$.
\end{say}

\begin{say}
Let { an integral} DM stack $\2M$ and { a} derived object $\sEb$ { be}
as in Theorem \ref{resolution}. Since $\sEb$ is perfect, we can cover $\2M$ by
open charts $\coprod U$ and for each open subset $U$ there is a
finite length complex $\sF^\bullet$ of locally free sheaves over
$U$ such that $\sEb|_U$ is represented by $\sF^\bullet$.  By our
assumption, we may assume that $\sF^\bullet$ has the form
\beq\begin{CD} \sF_0 @>{\psi}>> \sF_1 @>>> \cdots
\end{CD}
\eeq Our aim is to resolve the sheaf $\sH^0(\2M,\sEb)$. Note that
we have
$$\sH^0(U,\sEb|_U) \cong \ker \psi.$$
\end{say}

\begin{say} To  resolve the sheaf $\sH^0(\2M,\sEb)$, we
consider the following { model}.  Let $X$ be { a} scheme, { not necessarily
irreducible or reduced,} and $$\varphi: E \lra
F$$ be a homomorphism between locally free sheaves over $X$. We let
$\bigwedge^i \varphi: \bigwedge^i E \lra \bigwedge^i F$ be the
induced homomorphism between the wedge products, $i \ge 0$. We
also view $\bigwedge^i \varphi$ as a section of $\Hom(\bigwedge^i
E, \bigwedge^i F)$. Denote by $m$ the rank of the image sheaf
${\rm Im} \varphi$. This is the smallest integer such that
$\bigwedge^{m+1} \varphi \equiv 0$.
\end{say}

\begin{defi}\lab{defi:determinantal} For any  $0 \le r \le m-1$, we let
$Z_{\varphi,r} \subset X$ { be} the subscheme of { vanishing} of
the section $\bigwedge^{r+1} \varphi$ and call it the
$r$-determinantal subscheme of $X$ with respect to $\varphi$. { Its scheme
structure  is  given}  by the determinantal ideal $\sI_{\varphi, r}$ { which is}
generated by all $(r+1) \times (r+1)$ minor determinants of any
local matrix representation of $\varphi$.
\end{defi}

Note that the ideal sheaf $\sI_{\varphi, r}$ does not depend on
the choice of local matrix representation; it is supported on the
locus of points $w$ such that $\dim {\rm Im} (\varphi (w)) \le r$.

\begin{say}\lab{detIdealBaseChange}
Determinantal ideals have base change property. If $f: Y \lra X$
is a morphism between schemes, then the $r$-determinantal ideal of
$f^*\varphi: f^*E \lra f^*F$ is the pullback of the
$r$-determinantal ideal of $\varphi: E \lra F$. That is,
$\sI_{f^*\varphi, r} = f^*\sI_{\varphi, r}$.
\end{say}


\def\wX{\widetilde{X}}

\begin{say}
Now, we will define inductively the blowing ups of $X$ along the
determinantal ideal sheaves\footnote{We note here that similar
idea has been used in \cite{HLS}.}.  First, we let
$$\begin{CD} b_0: X_0 @>>> X \end{CD}$$
be the blowing-up of $X$ along the ideal sheaf $\sI_{\varphi,0}$.
For any $0 \le r \le m-2$, assume that $$\begin{CD} b_r: X_r @>>>
X_{r-1} \end{CD}$$  is already defined. We let
$$\begin{CD} \varphi_r: E_r @>>> F_r \end{CD}$$ be the pullback of $\varphi: E \lra
F$. Then we define
$$\begin{CD} b_r: X_{r+1} @>>>
X_{r} \end{CD}$$  to be the blowup of $X_r$ along the ideal sheaf
$\sI_{\varphi_r,r+1}$. After the final blowing up
$$\begin{CD} b_{m-1}: X_{m-1} @>>>
X_{m-2}, \end{CD}$$ we define
$$\begin{CD} b: \wX:=X_{m-1} @>>>
X \end{CD}$$ to be the induced iterated blowing up. We also let
$D_r$ to denote the exceptional divisor of the birational morphism
$b_r: X_r \lra X_{r-1}$.
\end{say}

\begin{theo}\lab{resolve-kernel} The birational morphism $b: \wX
\lra X$ resolves the kernel sheaf of $\varphi: E \lra F$. That is,
{ the kernel sheaf $\ker (b^* \varphi|_{\widetilde{X}'})$ is
locally free where $\widetilde{X'}$  is { any} irreducible component
of $\widetilde{X}$ with the reduced scheme structure.}
\end{theo}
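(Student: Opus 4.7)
My plan is to prove the theorem by induction on $m$, the rank of the image sheaf of $\varphi$. The base case $m=0$ is immediate: then $\varphi\equiv 0$, no blowups are made, and $\ker\varphi=E$ is locally free. Assume now $m\ge 1$ and that the result holds for maps of smaller generic image rank.

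I first analyze $b_0\colon X_0\to X$, the blowup along $\sI_{\varphi,0}$. Since $b_0^{-1}\sI_{\varphi,0}\cdot\sO_{X_0}=\sO_{X_0}(-D_0)$ is invertible, locally on $X_0$ we obtain a factorization $b_0^*\varphi=t\cdot\varphi'_0$, where $t$ is a local generator of $\sO(-D_0)$ and $\varphi'_0\colon b_0^*E\to b_0^*F\otimes\sO(D_0)$ has matrix entries that locally generate the unit ideal; equivalently, $\varphi'_0$ is nowhere zero on $X_0$. On any reduced irreducible component $X'_0$ of $X_0$ not contained in $D_0$, $t$ is a non-zerodivisor, hence $\ker(b_0^*\varphi)|_{X'_0}=\ker(\varphi'_0)|_{X'_0}$. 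Because at every point of $X'_0$ some entry of $\varphi'_0$ is a unit, on a small \'etale neighborhood $U\to X_0$ invertible row and column operations bring $\varphi'_0|_U$ into the block form
\beq
\varphi'_0|_U\cong\begin{pmatrix}1 & 0\\0 & \varphi''_0\end{pmatrix},
\eeq
with $\varphi''_0$ a map between locally free sheaves of ranks one smaller and of generic image rank $m-1$; moreover $\ker\varphi'_0|_U=0\oplus\ker\varphi''_0$.

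The core of the induction is the matching of the remaining blowups $b_1,\dots,b_{m-1}$ with the iterated determinantal blowups for $\varphi''_0$. Laplace expansion along the distinguished unit entry yields $\sI_{\varphi'_0,r+1}=\sI_{\varphi''_0,r}$ on $U$, and combined with $b_0^*\varphi=t\cdot\varphi'_0$ this gives
\beq
\sI_{\varphi_0,r+1}\big|_U=(t^{r+1})\cdot\sI_{\varphi''_0,r}\big|_U.
\eeq
Since $(t^{r+1})$ is invertible on $X'_0\cap U$, blowing up $\sI_{\varphi_0,r+1}$ on $X'_0$ agrees with blowing up $\sI_{\varphi''_0,r}$; by the base-change property \ref{detIdealBaseChange} of determinantal ideals, an analogous matching propagates through $b_2,\dots,b_{m-1}$. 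Applying the inductive hypothesis to $\varphi''_0$ then yields local freeness of $\ker\varphi''_0$ on reduced components of the subsequent iterated blowup, hence of $\ker(b^*\varphi)|_{\widetilde X'}$ on every reduced irreducible component $\widetilde X'$ of $\widetilde X$ not contained in any exceptional divisor.

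I expect two principal obstacles. First, the block-diagonal reduction is only valid \'etale-locally, so one must verify globally that the two blowup sequences coincide on $X'_0$, not merely chart by chart; this should follow from the locality of the blowup construction and the base-change property of determinantal ideals, but the bookkeeping across successive steps is delicate. Second, reduced irreducible components of $\widetilde X$ lying entirely inside some exceptional divisor must be handled separately, which can be done by iterating the same local analysis one step deeper into the blowup charts, essentially descending in dimension at each recursion.
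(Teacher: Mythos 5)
Your argument is, at its core, the same mechanism the paper uses: after the $r$-th blowup the relevant determinantal ideal becomes principal, you factor it out to obtain a map whose entries generate the unit ideal, you split off a unit block, and you observe that the next determinantal ideal agrees with the $0$-determinantal ideal of the smaller map up to an invertible factor, which lets the induction proceed. The paper packages this as an induction on the blowup stage $r$ (Theorem \ref{determ}), maintaining at each step the full block decomposition $E_r=\bigoplus_{i=0}^{r}G_i\oplus G_{r+1}$ with $\varphi_r=\mathrm{diag}[p_0I_0,\dots,p_rI_r,B]$ and $p_r\mid B$, whereas you induct on the generic image rank $m$ and only carry forward the last block $\varphi''_0$. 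The two inductions are essentially interchangeable, and your Laplace-expansion identity $\sI_{\varphi'_0,r+1}=\sI_{\varphi''_0,r}$ is exactly the paper's relation \eqref{differByL} in disguise (note a small arithmetic slip: the invertible prefactor should be $t^{r+2}$, not $t^{r+1}$, since $\sI_{\varphi_0,r+1}$ is generated by $(r+2)\times(r+2)$ minors of $t\,\varphi'_0$; this does not affect the argument).

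Where the paper's organization genuinely pays off is precisely in your second acknowledged obstacle. The paper first proves that $b^*\varphi$ is \emph{locally diagonalizable} (Theorem \ref{determ}), and then the local freeness of $\ker(b^*\varphi|_{\widetilde X'})$ for any integral component $\widetilde X'$ is a formal consequence of Proposition \ref{usefulFacts}(2): in a diagonal presentation $\mathrm{diag}(p_0,\dots,p_{m-1},0,\dots,0)$ over a small open set, the kernel on an integral $\widetilde X'$ is the direct sum of those coordinate subbundles $G_i$ for which $p_i\equiv 0$ on $\widetilde X'$ together with the trailing zero block, hence trivially locally free. This uniformly handles components contained in any union of exceptional divisors, with no case analysis. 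Your plan to ``iterate the same local analysis one step deeper'' for components inside exceptional divisors can be made to work (on such a component the relevant local generators $t_i$ vanish identically, forcing entire diagonal blocks to zero, and integrality makes the surviving $t_j$'s nonzerodivisors), but as written it is a sketch rather than a proof, and it recreates, less transparently, exactly what the diagonalizability abstraction gives you for free. I would recommend isolating the local diagonalizability statement as its own lemma, as the paper does, and deducing the kernel statement from it once and for all; as it stands your proof has a real, if small, gap at the final step.
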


To prove this theorem,  we will state and prove the technical but
stronger Theorem \ref{determ}, from which the above is an
immediate consequence. { To this end, we need the pivotal notion of locally
diagonalizable homomorphism.}

\begin{defi}\lab{diagonalizable} A homomorphism $\varphi: \sO_X^{\oplus p} \lra \sO_X^{\oplus
q}$ is said to be diagonalizable if we have direct sum
decompositions by trivial sheaves \beq\lab{decompositionE-F}
\sO_X^{\oplus p}=G_0 \oplus \bigoplus_{i=1}^l G_i \and
\sO_X^{\oplus q}=H_0 \oplus \bigoplus_{i=1}^l H_i \eeq with
$\varphi (G_i) \subset H_i$ for all $i$ such that
\begin{enumerate}
\item $\varphi|_{G_0}=0$; \item for  every $1\le i \le l$,
$\varphi|_{G_i}$ equals to $p_i I_i$ for some $0 \ne p_i \in
\Gamma(\sO_X)$  where $I_i: G_i \to H_i$ is an isomorphism; \item
{ $\langle p_i \rangle   \supsetneqq \langle p_{i+1} \rangle$}
\end{enumerate}
\end{defi}

\begin{defi} A homomorphism $\varphi: E \lra F$ between locally
free sheaves of a scheme $X$ is  locally diagonalizable if there
are trivializations of $E$ and $F$ over { some} open covering of $X$
such that  $\varphi: E \lra F$ is diagonalizable over every open
subset.
\end{defi}

It is routine to check the following { useful observations}.

\begin{prop}\lab{usefulFacts}
Suppose that a homomorphism $\varphi: E \lra F$ is  locally
diagonalizable, then
\begin{enumerate}
\item for every $0 \le r \le m-1$, the  determinantal ideal
$\sI_{\varphi, r}$ is invertible; \item  for every irreducible
component $X'$ of $X$ with the reduced scheme structure, $\ker
(\varphi|_{X'})$ is locally free\footnote{Note here that the rank
of $\ker \varphi$ depends on the properties of the functions
$p_i$, hence may not be constant over $X$. { Further, when
$X'$ is not reduced, $\ker (\varphi|_{X'})$ needs not to be
locally free. These technical issues lead us to use the
$``$integral$"$ assumption whenever and only when we want to
produce a locally free sheaf.}}; \item if $f: Y \lra X$ is a
morphism, then $f^* \varphi$ is also locally diagonalizable (i.e.,
$``$locally diagonalizable$"$ has base change property).
\end{enumerate}
\end{prop}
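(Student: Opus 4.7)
My plan is to reduce each of the three assertions to an explicit diagonal matrix computation, using the fact that, by hypothesis, $\varphi$ can be represented on suitable open sets as $\mathrm{diag}(p_1,\ldots,p_l,0,\ldots,0)$ with $p_1\mid p_2\mid\cdots\mid p_l$.

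For (1), I would compute $\sI_{\varphi,r}$ directly. A generic $(r+1)\times(r+1)$ minor of the above diagonal matrix either vanishes or has the form $p_{i_1}\cdots p_{i_{r+1}}$ with $i_1<\cdots<i_{r+1}\le l$. The divisibility chain $\langle p_i\rangle\supsetneq\langle p_{i+1}\rangle$ shows that every such product lies in $(p_1\cdots p_{r+1})$, while $p_1\cdots p_{r+1}$ is itself a minor. Hence $\sI_{\varphi,r}$ is locally the principal ideal $(p_1\cdots p_{r+1})$, and is therefore invertible.

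For (2), let $X'\subset X$ be a reduced irreducible component; on a diagonalizing chart, $\varphi|_{X'}$ is diagonal with entries $\bar p_i:=p_i|_{X'}\in\Gamma(\sO_{X'})$. Since $X'$ is integral, each $\bar p_i$ is either zero or a non-zero-divisor, so multiplication by $\bar p_i$ on $G_i|_{X'}$ has kernel equal to all of $G_i|_{X'}$ or to $0$ accordingly. Setting $J=\{i:\bar p_i=0\}$, one obtains
\[
\ker(\varphi|_{X'})\;\cong\;G_0|_{X'}\oplus\bigoplus_{i\in J}G_i|_{X'},
\]
which is visibly locally free. The integrality hypothesis is essential here both to guarantee that nonzero $\bar p_i$ are non-zero-divisors and to ensure that the set $J$ is locally constant, so that the local kernels glue into a globally locally free sheaf on $X'$.

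For (3), the decomposition of $E$ and $F$ pulls back tautologically to give $f^*E=f^*G_0\oplus\bigoplus f^*G_i$, $f^*F=f^*H_0\oplus\bigoplus f^*H_i$, with $f^*\varphi|_{f^*G_i}=f^*p_i\cdot f^*I_i$. The only issue is that after pullback some $f^*p_i$ may vanish, and distinct $\langle p_i\rangle,\langle p_{i+1}\rangle$ may generate equal ideals. I would refine the open cover of $Y$ so that on each piece the vanishing locus of each $f^*p_i$ is either empty or everything, absorb the indices with $f^*p_i=0$ into a new $G_0$, and for consecutive indices with $\langle f^*p_i\rangle=\langle f^*p_{i+1}\rangle$ rescale the $I_{i+1}$ by the unit relating them and merge the two summands into one with diagonal entry $f^*p_i$. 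This is the step I expect to be most delicate in full generality, since when $f^*p_i$ is not a non-zero-divisor the ``unit'' relating $f^*p_i$ and $f^*p_{i+1}$ is not uniquely determined; however, the assertion is local and it is enough to produce one diagonalizing trivialization at each point, which is always possible after a sufficient refinement.
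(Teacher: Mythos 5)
The paper gives no proof of this proposition (it is declared ``routine to check''), so there is nothing to compare against directly; your verification is the kind of argument the authors had in mind and is essentially correct. Parts (1) and (2) are fine: for (1) the chain $p_1\mid p_2\mid\cdots\mid p_l$ forces every $(r+1)\times(r+1)$ minor to lie in $\langle p_1\cdots p_{r+1}\rangle$, which is itself a minor, so the determinantal ideal is locally principal (and is honestly invertible once the $p_i$ are non-zero-divisors, e.g.\ when $X$ is integral --- the setting the authors actually use it in); for (2), on each diagonalizing chart the kernel restricted to the integral $X'$ is visibly the direct summand $G_0|_{X'}\oplus\bigoplus_{\bar p_i=0}G_i|_{X'}$, and since $\ker(\varphi|_{X'})$ is already a globally defined subsheaf of $E|_{X'}$ no separate gluing argument is needed --- irreducibility of $X'$ then fixes the rank, so the worry about $J$ being ``locally constant'' is automatic rather than something you must arrange.

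The point you correctly identify as delicate, in (3), does work out, and here is the clean way to see it. Suppose on some chart $\langle f^*p_i\rangle=\langle f^*p_{i+1}\rangle$ with $f^*p_i\ne 0$. Pass to the local ring $\sO_{Y,y}$ at a point $y$: write $f^*p_{i+1}=c\,f^*p_i$ and $f^*p_i=d\,f^*p_{i+1}$, so $f^*p_i(1-cd)=0$. If $1-cd$ were a unit then $f^*p_i=0$, a contradiction; hence $1-cd$ lies in the maximal ideal, so $cd$ (and therefore $c$) is a unit of $\sO_{Y,y}$. Spread $c$ and $c^{-1}$ out to an open neighborhood of $y$; on that neighborhood $f^*p_{i+1}=c\,f^*p_i$ with $c$ a unit, and you may replace $I_{i+1}$ by $cI_{i+1}$ and merge the two blocks. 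Thus the non-zero-divisor issue you flagged disappears after refining the cover, exactly as you hoped. Likewise, if $f^*p_i=0$ then $f^*p_j=0$ for all $j\ge i$ (by the divisibility chain), so the indices with vanishing $f^*p_j$ form a tail that can be absorbed into $G_0$ in one step. With these two remarks your outline for (3) becomes a complete proof.
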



We now arrive at our main technical theorem.

\begin{theo}\lab{determ} {{\rm (Diagonalization.)}}
For any $0 \le r \le m-1$, there is an open covering of $X_r$
trivializing $E_r$ and $F_r$ such that over each open subset $U$
we have direct sum decompositions by trivial sheaves $$ E_r=
\bigoplus_{i=0}^r G_i \oplus G_{r+1} \and F_r= \bigoplus_{i=0}^r
H_i \oplus H_{r+1} $$ with $\varphi_r (G_i) \subset H_i$ for all
$0 \le i \le r$ and $\varphi_r (G_{r+1}) \subset H_{r+1}$ making
the following true
\begin{enumerate} 
{
 \item  $G_i \cong H_i\cong \sO_U$ for all $0 \le i \le r$, hence $\bigoplus_{i=0}^r G_i \cong \bigoplus_{i=0}^r
H_i  \cong \sO_U^{\oplus(r+1)}$; \item for  every $0\le i \le r$,
$\varphi_r|_{G_i}$ equals to $p_i I_i$ for some $0 \ne p_i \in
\Gamma(\sO_X)$ where $I_i: G_i \to H_i$ is an isomorphism; \item
$p_i | p_{i+1}$ $(0 \le i \le r-1)$; \item $p_r$ divides
$\varphi_r|_{G_{r+1}}$. }
\end{enumerate}
 In particular, when $r=m-1$, we obtain that the homomorphism $b^* \varphi$ is locally diagonalizable,
 and { hence the kernel sheaf
  $\ker (b^* \varphi|_{\widetilde{X}'})$ is locally free where
$\widetilde{X}'$  is an irreducible component of $\widetilde{X}$
with the reduced scheme structure.}
\end{theo}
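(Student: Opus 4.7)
My strategy is to prove the theorem by induction on $r$, with the key observation at each stage being that the pertinent determinantal ideal factors as an invertible ideal times a strictly simpler one, so that blowing up effectively extracts only the new information contained in the simpler piece.

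For the base case $r=0$, on $X_0 = \Bl_{\sI_{\varphi,0}} X$ the pullback ideal $\sI_{\varphi_0,0}$ is the ideal of the exceptional Cartier divisor, hence locally principal with non-zero-divisor generator $p_0$. Writing $\varphi_0 = p_0 \widetilde\varphi_0$, the entries of $\widetilde\varphi_0$ generate the unit ideal locally, so at any point some entry is a unit; elementary row and column operations (i.e., a change of local trivializations of $E_0$ and $F_0$) then put $\varphi_0$ in block form $\diag(p_0,\, p_0 B_0')$, yielding the required decomposition with $p_0 \mid \varphi_0|_{G_1}$.

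For the inductive step, assuming the statement on $X_r$, I would work locally where $\varphi_r$ is block diagonal $\diag(p_0, p_1, \ldots, p_r,\, p_r \psi_r)$ with $p_0 \mid p_1 \mid \cdots \mid p_r$. The central computation is to identify the ideal $\sI_{\varphi_r, r+1}$: any $(r+2) \times (r+2)$ minor of $\varphi_r$, by the block structure, equals $\prod_{i \in S} p_i \cdot p_r^{r+2-|S|} \cdot M$ for some $S \subseteq \{0, \ldots, r\}$ and some $(r+2-|S|) \times (r+2-|S|)$ minor $M$ of $\psi_r$. Using $p_i \mid p_r$ for all $i \le r$, every such expression is divisible by the common factor $p_0 p_1 \cdots p_{r-1} p_r^2$, and the resulting quotients generate the ideal $\sI_{\psi_r, 0}$ of entries of $\psi_r$; consequently
\[
\sI_{\varphi_r, r+1} = (p_0 p_1 \cdots p_{r-1} p_r^2) \cdot \sI_{\psi_r, 0}.
\]
The principal factor is invertible, so $X_{r+1}$ coincides with $\Bl_{\sI_{\psi_r, 0}} X_r$. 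On this blowup $\sI_{\psi_r, 0}$ becomes principal with generator $q_{r+1}$, giving $\psi_r = q_{r+1} \widetilde\psi_r$ with unit-ideal entries. Setting $p_{r+1} = p_r q_{r+1}$ gives $p_r \mid p_{r+1}$, and a further change of trivialization at a point where some entry of $\widetilde\psi_r$ is a unit splits off a new rank-one block with map $p_{r+1} I_{r+1}$, leaving a residual block whose map is divisible by $p_{r+1}$.

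The hard part will be the ideal-theoretic factorization above, which requires enumerating the minors of the block matrix carefully and uniformly applying the divisibility $p_i \mid p_r$ to extract the common factor; verifying that the accumulated principal factor is generated by a non-zero-divisor (so that the equality of blowups is genuine) follows inductively from the Cartier property of successive exceptional divisors. Once the induction reaches $r = m-1$, comparing the identity $\bigwedge^{m+1} b^* \varphi = b^*\bigwedge^{m+1} \varphi = 0$ with the block form and using non-zero-divisor properties on each reduced irreducible component forces the residual map $\varphi_{m-1}|_{G_m}$ to vanish there, realizing $b^*\varphi$ as locally diagonalizable in the sense of Definition \ref{diagonalizable} (absorbing adjacent blocks whose $p_i$ coincide up to units to obtain the strict containment clause). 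Proposition \ref{usefulFacts}(2) then delivers local freeness of $\ker(b^*\varphi|_{\widetilde X'})$ on any reduced irreducible component.
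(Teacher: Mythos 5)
Your proposal follows the paper's proof essentially verbatim: iterated blowup along determinantal ideals, the key factorization $\sI_{\varphi_r, r+1} = (p_0\cdots p_r)\,p_r\,\sI_{\varphi_r',0}$ (the paper's equation \eqref{differByL}, with your $\psi_r$ being the paper's $\varphi_r'$), reduction to a rank-one peeling step, and the non-zero-divisor argument to kill the residual block when $r = m-1$. The one place you add value is spelling out the minor-by-minor proof of the factorization via the block-diagonal structure together with the divisibility chain $p_0\mid p_1\mid\cdots\mid p_r$; the paper merely asserts this identity. Your closing remark about absorbing adjacent blocks with associate $p_i$ to recover the strict containments of Definition \ref{diagonalizable} anticipates the remark the paper makes immediately after the theorem. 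In short: correct, same route, slightly more detail on the determinantal-ideal computation.
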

\begin{proof} We will prove by induction on $r$.

First, consider the case of $r=0$. Take any point $\xi \in X$.
 Locally around $\xi$, we can trivialize
$E$ and $F$ over an open neighborhood $U$ of $\xi$ and choose
bases of $E$ and $F$ such that $\varphi$ is given by the matrix
$(\mu_{ij})$. 

If $Z_{\varphi,0} = \emptyset$, or equivalently $\sI_{\varphi,
0}=\sO_U$, then there is a $\mu_{ij}$ such that $\mu_{ij}(0) \ne
0$. By shrinking $U$ we may assume that $\mu_{ij} \in \Gamma
(\sO_U^*)$. This way, by a basis change, we can arrange
decompositions
$$E \cong G_1\oplus G_2 \and F \cong H_1\oplus H_2$$
such that $G_1, H_1 \cong  \sO_U$, $\varphi (G_1) \subset H_1,
\varphi (G_2)\subset H_2$,
 and $\varphi|_{G_1}: G_1 \lra H_1$ is an isomorphism.
Since $X_0=X$ in this case, the statements of the theorem hold.

If $Z_{\varphi,0}$ is a Cartier divisor, i.e., the ideal
$\sI_{\varphi,0}$ is the principal ideal $\langle p \rangle$
generated by some $p \in \Gamma(\sO_U)$, then we can write
$(\mu_{ij})$ as $(p \cdot\nu_{ij})$. Since  $\langle \mu_{ij}
\rangle = \langle p \rangle$, we see that $\langle \nu_{ij}
\rangle = \sO_U$. This implies that the homomorphism $\varphi$
factors as
$$ 
\begin{CD} \varphi: E @>{\varphi'}>> F(-Z_{\varphi,0}) @>{\rm inclusion}>>
F,
\end{CD}
$$  and $\varphi'=(\nu_{ij})$ has that $I_{\varphi_0',0} = \sO_U$. Now apply the previous case
to the homomorphism $\varphi'$, one checks that we  obtain
decompositions
$$E=G_1\oplus G_2 \and F=H_1\oplus H_2$$
such that $G_1, H_1 \cong  \sO_U$, $\varphi (G_1) \subset H_1,
\varphi (G_2)\subset H_2$,
 and $\varphi|_{G_1}: G_1 \lra H_1$ equals $pI_1$ where
$I_1: G_1 \lra H_1$ is an ismomorphism
 and $p$ divides  $\varphi|_{G_2}$.
Again, since $X_0=X$ in this case, the statements of the theorem hold.

If $Z_{\varphi,0}$ is not a Cartier divisor, that is,
$\sI_{\varphi,0}$ is not principal, we blow up $Z_{\varphi,0}$ to
obtain
$$b_0: X_0 \lra X \and b_0^* \varphi: b_0^* E \lra b_0^* F. $$
 By
\ref{detIdealBaseChange}, we have that $\sI_{b_0^*\varphi,0}=b_0^*
\sI_{\varphi,0}$. Since $b_0^* \sI_{\varphi,0}$ is principal, this
reduces to the previous case. Note that $b_0^*\varphi =
\varphi_0$. Thus, the case 0 is proved.

Assume now that the assertion holds for $r$. Since the question is
local, we can restrict our focus on an affine open subset $U$ of
$X_r$ such that $E_r$ are $F_r$  are all trivialized with the
desired decomposition as in the theorem and $\varphi_r: E_r \lra
F_r$ has the desired properties  as in the theorem. { In terms of
the suitable bases of $E_r$ and $F_r$ as in the theorem granted by
the inductive assumption,}  all these mean is that we can
represent $\varphi_r$ by the diagonal matrix
 $${\rm diag}[p_0I_0, \cdots, p_rI_r, B] $$
 where $B$ is the matrix representation of $\varphi_r|_{G_{r+1}}$
 and $p_r | B$. { Since $p_r | B$, we may write $B= p_r B'$ and let
 $ \varphi_r': G_{r+1} \lra H_{r+1}$ be the homomorphism
 corresponding to $B'$.
From the above representation, we see that \beq\lab{differByL}
\sI_{\varphi_r, r+1} =(p_0 \cdots p_r) p_r \sI_{\varphi_r', 0}.
\eeq }
Hence blowing up
$\sI_{\varphi_r, r+1}$ is the same as blowing up $\sI_{\varphi_r',
0}$. Now we can apply the case 0 to the homomorphism
$$\varphi_r': G_{r+1} \lra H_{r+1}.$$
From here, it is routine to check against the three cases of case 0 (for $\varphi_r'$)
so that we will obtain the desired decompositions for $E_{r+1}$ and
 $F_{r+1}$ with the desired properties for $\varphi_{r+1}$.

By induction, this proves the statements (1)--(4) of the theorem.

To finish off, in the case of $r=m-1$, because $\oplus_{i=0}^r G_i
\cong \sO_{ U}^{\oplus m}$ with $m$ the maximal rank of
$\varphi_{m-1}=b^*\varphi$, we must have that $\varphi_{m-1}
(G_{r+1})=0$. This implies that $b^*\varphi$ is { locally
diagonalizable, and in particular, if we restrict $b^*\varphi$ to
an irreducible component with the reduced structure, its kernel
sheaf is locally free.}

This completes the proof.
\end{proof}

\begin{rema}The requirement that each $G_i$ is isomorphic to
$\sO_U$ in the above theorem appears to be $``$stronger$"$ than
Definition \ref{diagonalizable}, but in fact, they are equivalent
{ since in this theorem  we allow $\langle p_i \rangle = \langle
p_{i+1} \rangle$ for some pairs $p_i$ and $p_{i+1}$.}
\end{rema}

\begin{prop}\lab{prop:universality} {\rm (Universality.)}
Let $\varphi: E \lra F$ be a homomorphism between locally free
sheaves  over a scheme $X$ and $\widetilde{X}$ the blowup of $X$
along determinantal loci of $\varphi$. If $f: Z \lra X$ is any
dominant morphism between  schemes such that the pullback
homomorphism $f^*\varphi: f^*E \lra f^*F$  is locally
diagonalizable, then $f$ factors uniquely through $\widetilde{X}$.
\end{prop}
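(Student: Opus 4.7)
The plan is to apply the universal property of blowing up iteratively along the tower
$$
\widetilde{X}=X_{m-1}\xrightarrow{\,b_{m-2}\,}X_{m-2}\xrightarrow{\,b_{m-3}\,}\cdots\xrightarrow{\,b_1\,}X_1\xrightarrow{\,b_0\,}X,
$$
the point being that a morphism $g:Z\to X_r$ factors uniquely through $b_r:X_{r+1}\to X_r$ if and only if the pullback $g^*\sI_{\varphi_r,r+1}$ is an invertible ideal sheaf. Note that the locally diagonalizable hypothesis on $f^*\varphi$, combined with Proposition \ref{usefulFacts}(1), immediately makes all the relevant determinantal ideals on $Z$ invertible.

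\textbf{Base step.} By Proposition \ref{usefulFacts}(1) applied to the locally diagonalizable homomorphism $f^*\varphi$, the ideal $\sI_{f^*\varphi,0}$ is invertible on $Z$. By the base change property \ref{detIdealBaseChange}, $\sI_{f^*\varphi,0}=f^*\sI_{\varphi,0}$, so $f^*\sI_{\varphi,0}$ is invertible. The universal property of the blowup $b_0:X_0\to X$ then yields a unique factorization $f=b_0\circ f_0$ with $f_0:Z\to X_0$.

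\textbf{Inductive step.} Assume we have constructed a unique factorization $f_r:Z\to X_r$ with $f=b_0\circ\cdots\circ b_{r-1}\circ f_r$. Since $\varphi_r$ is the pullback of $\varphi$ to $X_r$, we have $f_r^*\varphi_r = f^*\varphi$, which is still locally diagonalizable by hypothesis (and would also follow from Proposition \ref{usefulFacts}(3)). Applying Proposition \ref{usefulFacts}(1) again gives that $\sI_{f_r^*\varphi_r,r+1}=f_r^*\sI_{\varphi_r,r+1}$ is invertible on $Z$. The universal property of $b_r:X_{r+1}\to X_r$ then furnishes a unique factorization $f_r=b_r\circ f_{r+1}$ with $f_{r+1}:Z\to X_{r+1}$. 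After $m$ such steps we obtain the desired unique $\widetilde{f}:Z\to\widetilde{X}=X_{m-1}$ with $f=b\circ\widetilde{f}$.

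\textbf{Main obstacle and role of dominance.} The induction itself is essentially bookkeeping once invertibility of the pulled back determinantal ideals is in hand; the only real point is to verify that at each stage $\sI_{\varphi_r,r+1}$ is the ideal being blown up next, matching the construction of $\widetilde{X}$, and that the pullback of this ideal to $Z$ really coincides with $\sI_{f^*\varphi,r+1}$. The hypothesis that $f$ is dominant enters here: it ensures that the rank $m$ of the image of $f^*\varphi$ agrees with the rank of the image of $\varphi$, so that the blowup tower computed from $f^*\varphi$ terminates after exactly $m$ steps, and it prevents the degenerate phenomenon of $f^*\sI_{\varphi,r}$ being the zero ideal (which would violate the invertibility needed for the universal property). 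With these checks, uniqueness of each factorization is automatic from the uniqueness clause in the universal property of blowups.
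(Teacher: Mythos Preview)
Your proof is correct and follows essentially the same inductive strategy as the paper: at each stage apply the universal property of blowing up, using that the pulled-back determinantal ideal on $Z$ is invertible because $f^*\varphi$ is locally diagonalizable. The only cosmetic difference is that the paper, in the inductive step, routes through the auxiliary homomorphism $\varphi_r'$ and equation \eqref{differByL} to compare $\sI_{\varphi_r',0}$ with $\sI_{\varphi,r+1}$, whereas you work directly with $\sI_{\varphi_r,r+1}$ via $f_r^*\varphi_r=f^*\varphi$; your route is slightly more streamlined but the underlying argument is the same.
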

\proof Let $m$ be the rank of the image sheaf ${\rm Im} \varphi$.
For any  $0 \le r \le m-1$, we will show inductively that $f$
factors uniquely through $X_r$.

When $r=0$, consider any point $\xi \in Z$ such that $f(\xi)$
belongs to the $0$-determinantal locus of $\varphi$.  As in the
proof of Theorem \ref{determ}, locally around $f(\xi) \in W$, we
can represent $\varphi$ by a matrix $(\mu_{ij})$ with $\mu_{ij}
\in \Gamma(\sO_U)$, where $U$ is an open neighborhood of $f(\xi)$.
The homomorphism $f^*\varphi: f^*E \lra f^*F$ is given by the
pullbacks $(f^*\mu_{ij})$.  Let $\sJ=f^{-1} \sI_{\varphi,0}$ be
the ideal generated by $(f^*\mu_{ij})$. Since $f$ is dominant,
$f(Z)$ is not entirely contained in the 0-determinantal locus of
$\varphi$, hence $\sJ \ne 0$. But then, since $f^*\varphi: f^*E
\lra f^*F$ is diagonalizable, we can represent $f^*\varphi$ as a
diagonal matrix ${\rm diag}[p_1I_1, \cdots, p_lI_l, 0]$. This
shows that
 $\sJ$ is principal. By the universal property of blowing up
(Proposition 7.14 of \cite{Hartshorne}), $f$ factors uniquely
through  $X_0$.

Now assume that the claim holds for $r$: $f$ factors uniquely
through $X_r$
$$\begin{CD} f: Z @>{g}>> X_r  @>{b_r}>>X.\end{CD}$$
Since $X_{r+1}$ is the blowup of $X_r$ along $\sI_{\varphi_r',0}$
(here we use the notation as in the proof Theorem \ref{determ}),
we need only to consider a small open neighborhood of any  point
$\xi \in Z$ such that $g(\xi)$ belongs to the $0$-determinantal
locus of $\varphi_r'$. Since $f$ is dominant, by the base change
property, we have
$$g^{-1} \sI_{\varphi_r',0}=\sI_{g^*\varphi_r',0} \and
f^{-1}\sI_{\varphi, r+1}= \sI_{f^*\varphi, r+1}.$$
 By \eqref{differByL} in the proof of Theorem \ref{determ}, we see
 that
$\sI_{g^*\varphi_r',0}$ and $\sI_{f^*\varphi, r+1}$ differ by an
invertible sheaf. Since $f^*\varphi: f^*E \lra f^*F$ is
diagonalizable, from its diagonal representation we conclude that
$\sI_{f^*\varphi, r+1}$, { which is} $f^{-1}\sI_{\varphi, r+1}$,
is invertible. Hence, so is $\sI_{g^*\varphi_r',0} = g^{-1}
\sI_{\varphi_r',0}$.
 This implies that $g$
 factors uniquely through $X_{r+1}$, hence so does $f$.

 By induction, this finishes the proof.
\endproof

\begin{defi} For any homomorphism $\psi: E \lra F$ between two locally free sheaves over a scheme $X$,
we say that a point $x \in X$ is $\psi$-regular if $\psi (x)$ is
of maximal rank.
\end{defi}

It is easy to see that if $x \in X$ is $\psi$-regular, then
locally around $x$,  $\psi: E \lra F$ is diagonalizable (and in
fact, can be  diagonalized to the form ${\rm diag}(I, 0)$).

\begin{rema} The universality proposition \ref{prop:universality}
may also hold for { any} morphism $f: Z \lra X$ such that $f(Z)$ contains
$\varphi$-regular points and $f^* \varphi$ is locally
diagonalizable. But, we do not need this stronger version in this
paper.
\end{rema}


\section{Resolution of Derived Object}

\begin{say}\lab{Fitting}
Let $\sG$ be any coherent sheaf over a scheme $X$. A presentation
of  $\sG$ is an exact sequence
$$\sO_{X}^{\oplus n} \stackrel{\alpha}{\lra} \sO_{X}^{\oplus m}\lra \sG \lra 0.$$
The $h$-th $(h \le m)$ Fitting ideal of the above presentation,
denoted $\sJ_h(\sG)$,
 is the $(m-h)$-determinatal ideal of
the homomorphism $\alpha$ (and is defined to be $\sO_X$ when $h >
m$). The basic property of Fitting ideals is that any two
presentations of $\sG$ have the same
 Fitting ideals (\cite{Fitting, Northcott}).
 This enables us to define the Fitting ideals of the coherent sheaf $\sG$ without
 reference to any particular presentation. It can be shown that
 $\sJ_h(\sG)$ are finitely generated and form an increasing
 sequence
 $$\sJ_0(\sG) \subset \sJ_1(\sG) \subset \cdots.$$
Taking Fitting ideal commutes with base change. That is, if
 $g: Y \lra X$ is a morphism, then the $h$-th Fitting ideal of the
 sheaf $f^* \sG$ is generated, as an $\sO_Y$-module, by the $h$-th Fitting ideal of
 $\sG$. This is because the pullback of a presentation of $\sG$ is
 a presentation of $f^*\sG$ since the tensor product is a right
 exact functor.
 In addition, taking Fitting ideal commutes with localization.
 Let $S$ be any multiplicative closed subset of $\sO_X$ not containing the zero element. Then for every
 $h \ge 0$ we have
 $$\sJ_h(\sG)(\sO_X)_S = \sJ_h(\sG_S),$$
 where by $\sJ_h(\sG_S)$ we mean the $h$-th Fitting ideal of the sheaf $\sG_S$ of $(\sO_X)_S$-modules.
 For more details of Fitting ideals, the reader is referred to \cite{Fitting} and \cite{Northcott}.
\end{say}

\begin{say}\lab{notations} We now are ready to prove Theorem \ref{resolution} which we
restate below.  Since $\sEb$ is perfect and  can locally be represented
by a complex of locally free sheaves of finite length supported
only in non-negative degrees,  we can assume that $\2M$ admits an open cover $\coprod U$,
and over each open subset $U$, its restriction $\sEb|_U$ is
represented by the following complex of locally free sheaves
\beq\lab{localPre}
\begin{CD} \sF_0 @>{\psi_0}>>  \sF_1 @>{\psi_1}>> \cdots @>>> \sF_{n-1} @>{
\psi_{n-1}}>> \sF_n.
\end{CD}\eeq
\end{say}

\begin{theo}\lab{resolution-restated}
Let $\2M$ be an { integral}  DM stack and $\sEb$  a perfect
object in the derived category ${\rm D}^b(\2M)$ 
which can be locally represented
by a complex of locally free sheaves of finite length supported
only in non-negative degrees. 
Then there is another { integral} DM stack $\wfM$ and a
dominant birational morphism $f: \wfM\to \2M$ such that { over any open chart and} for every
$0 \le i \le n-1$, the homomorphism $\begin{CD}{ f}^*\sF_i
@>{{ f}^* \psi_i}>> { f}^*\sF_{i+1}\end{CD}$ is
diagonalizable. In particular, $\sH^0(Lf\sta \sEb)$ is locally
free.
\end{theo}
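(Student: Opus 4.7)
The plan is to prove Theorem \ref{resolution-restated} by induction on the length $n$ of the local complex \eqref{localPre}, at each stage invoking the diagonalization Theorem \ref{determ} for one homomorphism $\psi_i$ at a time, and then using the universality Proposition \ref{prop:universality} together with the base-change behavior of Fitting ideals (Paragraph \ref{Fitting}) to patch the local constructions into a single birational morphism of DM stacks.

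For the base step I focus on $\psi_0: \sF_0 \to \sF_1$. On each chart $U$ of an \'etale atlas of $\2M$, I apply the iterated determinantal blowup of Section \ref{determinantalBlowup} to $\psi_0|_U$ to obtain a birational morphism $\widetilde{U} \to U$ over which the pullback of $\psi_0$ is diagonalizable. To glue these local blowups I reformulate the construction intrinsically: the determinantal ideals $\sI_{\psi_0,r}$ are precisely (shifted) Fitting ideals of $\coker(\psi_0)$, and while $\coker(\psi_0)$ itself is not invariant under quasi-isomorphism of the representation, the associated Fitting ideals coincide with those of the intrinsic cohomology sheaf $\sH^1(\sEb)$ (or of a functorial stand-in built from the truncation) because the Fitting ideals commute with the natural quasi-isomorphism changes (Paragraph \ref{Fitting}). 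Consequently the local iterated blowups glue to a canonical birational morphism $f_0: \2M_0 \to \2M$ of DM stacks. After this blowup I replace $\2M_0$ by the unique irreducible component dominating $\2M$, equipped with the reduced structure, so as to retain the integrality hypothesis; on this component $f_0^*\psi_0$ is diagonalizable on each chart by Theorem \ref{determ}, and its kernel is locally free by Proposition \ref{usefulFacts}(2).

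For the inductive step, pull back the full complex \eqref{localPre} to $\2M_0$ and consider $f_0^*\psi_1$. By the same argument applied chart by chart, I construct a birational morphism $f_1: \2M_1 \to \2M_0$ that diagonalizes the pullback of $\psi_1$; crucially, diagonalizability is preserved under arbitrary pullback by Proposition \ref{usefulFacts}(3), so the pullback of $f_0^*\psi_0$ remains diagonalizable on $\2M_1$. Continuing this way through $\psi_2, \ldots, \psi_{n-1}$ and taking the relevant integral component at each stage, after $n$ iterations I obtain a composition
\[
f: \wfM \,\xrightarrow{f_{n-1}}\, \2M_{n-2} \,\to\, \cdots \,\to\, \2M_0 \,\xrightarrow{f_0}\, \2M,
\]
which is dominant and birational, and for which every $f^*\psi_i$ is diagonalizable on every chart. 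The local freeness of $\sH^0(Lf^*\sEb)$ then follows: since $\sEb$ is supported in non-negative degrees, $Lf^*\sEb$ is locally represented by $f^*\sFb$, whence $\sH^0(Lf^*\sEb) = \ker(f^*\psi_0)$; since $f^*\psi_0$ is locally diagonalizable and $\wfM$ is integral, its kernel is locally free by Proposition \ref{usefulFacts}(2).

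The main obstacle is the globalization/gluing step in the base case. The local complexes \eqref{localPre} presenting $\sEb$ are only well-defined up to quasi-isomorphism across chart overlaps, so the determinantal ideals of the $\psi_i$ are not a priori intrinsic. The key point to establish rigorously is that the \emph{iterated} blowup of Section \ref{determinantalBlowup} depends only on the quasi-isomorphism class of $\sEb$ (not on a chosen local representative); I expect to do this either by directly recognizing the successive determinantal ideals as Fitting ideals of intrinsic cohomology sheaves, or by invoking Proposition \ref{prop:universality} for transition maps between two local representations and appealing to uniqueness of the factorization. Once this intrinsic reformulation is in place the induction runs smoothly and the locally-free conclusion for $\sH^0$ follows immediately from Theorem \ref{determ}.
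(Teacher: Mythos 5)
Your proposal runs the induction from the \emph{bottom} of the complex ($\psi_0$) upward, whereas the paper's proof runs it from the \emph{top} ($\psi_{n-1}$) downward. This is not a cosmetic difference: it is exactly where your gluing argument breaks. The determinantal ideals $\sI_{\psi_0,r}$ are the Fitting ideals of $\coker\psi_0$, and $\coker\psi_0$ is \emph{not} an intrinsic invariant of the derived object $\sEb$ when $n\geq 2$ --- it sits in a short exact sequence $0\to\sH^1(\sEb)\to\coker\psi_0\to\operatorname{Im}\psi_1\to 0$, and Fitting ideals do not behave in any controllable way under such extensions. Your claim that ``the associated Fitting ideals coincide with those of the intrinsic cohomology sheaf $\sH^1(\sEb)$'' is false in general: for a concrete failure take $\sF^\bullet_U=[\sO\xrightarrow{0}\sO\xrightarrow{\mathrm{id}}\sO]$ and $\sF^\bullet_V=[\sO\to 0\to 0]$, which are quasi-isomorphic representatives, yet $\coker\psi_0$ is $\sO$ with $0$-th Fitting ideal $0$ in the first case and is $0$ with $0$-th Fitting ideal $\sO$ in the second. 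Hence the local blowup centers do not agree on overlaps, and neither the direct Fitting-ideal comparison nor an appeal to Proposition \ref{prop:universality} (which compares blowups of a \emph{single fixed} $\varphi$, not two distinct local representatives) can patch them into a global morphism of stacks.

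The fix --- and this is what the paper actually does --- is to start at the other end. The cokernel $\coker\psi_{n-1}$ \emph{is} the intrinsic cohomology sheaf $\sH^n(\sEb)$, so the determinantal ideals of $\psi_{n-1}$ are Fitting ideals of an intrinsic sheaf and therefore glue. After blowing up, $\wfM_{n-1}$ is again integral (since blowing up a domain along any ideal gives a domain), so by Theorem \ref{determ} the kernel of $f_{n-1}^*\psi_{n-1}$ is locally free. This produces a new locally free presentation
$f_{n-1}^*\sF_{n-2}\to\ker f_{n-1}^*\psi_{n-1}\to\sH^{n-1}(Lf_{n-1}^*\sEb)\to 0$
whose target is again an intrinsic cohomology sheaf, and one iterates downward through $n-2,\ldots,0$. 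At the final step $\sH^0(Lf^*\sEb)=\ker f^*\psi_0$ is locally free by Proposition \ref{usefulFacts}(2). Incidentally, you do not need to pass to an irreducible component after each blowup --- integrality is automatically preserved --- but that is a minor point compared with the direction of the induction, which must be top-down for the Fitting-ideal gluing to be available.
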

\begin{proof} We will adopt the notations from \ref{notations}
and assume that locally $\sEb$ is represented by the complex as in
\eqref{localPre}.

 We first consider the coherent
sheaf $\sH^n(\sFb)=\coker \psi_{n-1}$. The sequence
$$\begin{CD} \sF_{n-1} @>{ \psi_{n-1}}>> \sF_n @>>> \sH^n(\sFb) @>>>0
\end{CD}$$ is a locally free presentation of $\sH^n(\sEb)$. Since the
Fitting ideals of $\sH^n(\sEb)$ are the same as the determinatal
ideals of $ \psi_{n-1}: \sF_{n-1} \to \sF_n$ and Fitting ideals
are independent of presentations, we conclude that when applied to
the homomorphism $\psi_{n-1}: \sF_{n-1} \to \sF_n$ over the open
subset $U$, the iterated blowup as described in \S
\ref{determinantalBlowup} patch together to produce a well-defined
iterated blowup of $\2M$
$$f_{n-1}: \2M_{n-1} \lra \2M$$
such that $f_{n-1}^* \psi_{n-1}$  is { locally} diagonalizable.
{ Now observe that if $M$ is integral and $Z \subset M$ a
closed substack, then $L_Z M$, the blowup of $M$ along $Z$, is
also integral. This follows from the fact that if $I$ is any ideal
in a domain $A$, then $\bigoplus_n I^n$ is also a domain.} Thus
{ $\2M_{n-1}$ is integral and hence $\ker  f_{n-1}^*
\psi_{n-1}$ is locally free.}

Now we apply the left derived functor $Lf_{n-1}^*$ to $\sEb$
and obtain  $$Lf_{n-1}^* \sEb \in {\rm D}^b(M_{n-1}).$$ The
stack $M_{n-1}$ admits the open cover by $\coprod
f_{n-1}^{-1}(U)$. Over the open subset $ f_{n-1}^{-1}(U)$, $Lf_{n-1}^* \sEb$ is represented by
$$
\begin{CD}f_{n-1}^*\sF_0 @>{f_{n-1}^*\psi_0}>>  f_{n-1}^*\sF_1 @>>> \cdots f_{n-1}^*\sF_{n-1} @>{
f_{n-1}^*\psi_{n-1}}>> f_{n-1}^*\sF_n.
\end{CD}$$
Consider the short exact sequence
$$
\begin{CD}
f_{n-1}^*\sF_{n-2} @>{f_{n-1}^*\psi_{n-2}}>> \ker
f_{n-1}^*\psi_{n-1} @>>> \sH^{n-1} (Lf_{n-1}^*\sEb) @>>> 0.
\end{CD}$$
This is a locally free presentation of $\sH^{n-1} (Lf_{n-1}^*\sEb)$.  This allows us to apply the same blowing up
process as in the previous step to the coherent sheaf $\sH^{n-1}
(Lf_{n-1}^*\sEb)$ to get
$$f_{n-2} : \2M_{n-2} \lra \2M_{n-1}$$ such that $
f_{n-2}^*f_{n-1}^*\psi_{n-2}$ is diagonalizable and { for the
same reason as explained earlier} $\ker
f_{n-2}^*f_{n-1}^*\psi_{n-2}$ is locally free.

Applying  the above repeatedly (or by induction), we will
eventually arrive at a birational dominant morphism $$f: \wfM \lra
\2M,$$ factoring as
$$\begin{CD}
\wfM =M_0 @>{f_0}>> M_1  @>{f_1}>> \cdots  @>{f_{n-2}}>> M_{n-1}
@>{f_{n-1}}>> M,
\end{CD}
$$
  such that if we let $f$ to be $f_{n-1} \circ f_{n-2} \cdots
\circ f_0$, then for each $0 \le i \le n-1$, $f^* \psi_i $ is
diagonalizable, { $M_i$ is integral} and $\ker f^* \psi_i$ is
locally free. Now use that $Lf^*\sEb$ is locally represented by
$$
\begin{CD}f^*\sF_0 @>{f^*\psi_0}>>  f^*\sF_1 @>>> \cdots f^*\sF_{n-1} @>{
f^*\psi_{n-1}}>> f^*\sF_n,
\end{CD}$$
we see that $$\sH^0(Lf^*\sEb)=\ker f^*\psi_0,$$ the theorem is
thus proved.
\end{proof}

{
\begin{coro}\lab{2-term}
Let $\2M$ be a DM stack { (not necessarily integral)} and
$\sEb$ a  perfect object in the derived category ${\rm D}^b(\2M)$
with $\sH^i(\sEb)=0$ for $i<0$.  {  Assume further that locally
$\sEb$ can be represented by a two-term complex $\begin{CD} \sF_0
@>{ \psi}>> \sF_1\end{CD}$ of locally free sheaves.}
Then there is another  DM stack $\wfM$ and a  birational morphism
$f: \wfM\to \2M$ such that  the homomorphism $\begin{CD} f^*\sF_0
@>{ f^*\psi}>> f^*\sF_1\end{CD}$ is diagonalizable. In particular,
for any irreducible component $\wfM'$ of $\wfM$ endowed with the
reduced stack structure, $\sH^0(Lf\sta \sEb|_{\wfM'})$ is
locally free.
\end{coro}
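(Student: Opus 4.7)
The plan is to reduce to the local two-term case handled by Theorem~\ref{determ} (which, crucially, does not require the underlying scheme to be integral or reduced), and then verify that the local constructions patch together globally. Since the integrality hypothesis on $\2M$ was used in Theorem~\ref{resolution-restated} only to ensure that the successive blowups remain integral (so that the intermediate kernels are locally free), dropping that hypothesis forces us instead to restrict to a single reduced irreducible component at the very end.

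First I would pin down the right intrinsic ideals to blow up. Since $\sEb$ can locally be represented by a two-term complex $\sF_0 \stackrel{\psi}{\to} \sF_1$ concentrated in degrees $0,1$, its top cohomology sheaf $\sH^1(\sEb) = \coker \psi$ is a globally defined coherent sheaf on $\2M$. By the formalism of Fitting ideals recalled in \ref{Fitting}, the Fitting ideals $\sJ_h(\sH^1(\sEb)) \sub \sO_{\2M}$ are globally defined, independent of presentation, and over any chart on which $\sEb$ is represented by $\sF_0 \stackrel{\psi}{\to} \sF_1$ they agree, up to a shift of index by $\rank \sF_1$, with the determinantal ideals $\sI_{\psi,r}$ of Definition~\ref{defi:determinantal}. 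Since two local representations of $\sEb$ by two-term complexes differ by addition of acyclic summands of the form $\sO \stackrel{\id}{\to} \sO$, they manifestly yield the same Fitting ideals, so the glueing is automatic.

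Next I would define $f\colon \wfM \to \2M$ as the iterated blowup along these (appropriately indexed) Fitting ideals of $\sH^1(\sEb)$, exactly following the recipe of Section~\ref{determinantalBlowup}. Because Fitting ideals commute with pullback and localization, this global iterated blowup restricts on each chart to the iterated determinantal blowup of the local $\psi$. Theorem~\ref{determ} applies directly on each such chart, in the case $r=m-1$, and it is essential to note here that the theorem makes no integrality or reducedness assumption on the ambient scheme. Its conclusion gives that the pulled-back homomorphism $f\sta \psi\colon f\sta\sF_0 \to f\sta \sF_1$ is locally diagonalizable on all of $\wfM$, which is the first assertion of the corollary.

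For the final assertion, fix any irreducible component $\wfM'$ of $\wfM$ equipped with the reduced induced stack structure, and let $f'\colon \wfM'\to \2M$ denote the composition. Locally diagonalizability is stable under base change by Proposition~\ref{usefulFacts}(3), so $(f')\sta\psi$ is again locally diagonalizable; then Proposition~\ref{usefulFacts}(2) (or equivalently the concluding sentence of Theorem~\ref{determ}) yields that $\ker\bigl((f')\sta\psi\bigr)$ is locally free. Since $\sEb|_{\wfM'}$ is represented by the two-term complex $(f')\sta\sF_0 \to (f')\sta\sF_1$ concentrated in degrees $0,1$, we have $\sH^0(Lf\sta\sEb|_{\wfM'}) = \ker\bigl((f')\sta\psi\bigr)$, which finishes the proof. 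The only genuine subtlety in the argument is the glueing bookkeeping at the start, once that is in hand the rest is a direct invocation of Theorem~\ref{determ}.
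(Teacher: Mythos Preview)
Your proposal is correct and follows essentially the same approach as the paper: blow up along the Fitting ideals of the globally defined sheaf $\sH^1(\sEb)=\coker\psi$, then invoke Theorem~\ref{determ} for diagonalizability and Proposition~\ref{usefulFacts}(2) for local freeness of the kernel on a reduced irreducible component. Your write-up is more explicit about the glueing and about why integrality is not needed, but the strategy is identical to the paper's short proof.
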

}
\begin{proof} {
As in the first step of the proof of the above theorem,
we consider the locally free presentation
$$\begin{CD} \sF_0 @>{ \psi}>> \sF_1 @>>> \sH^1 (\Eb) @>>> 0\end{CD}$$
of the sheaf $\sH^1 (\Eb)$,} we then
simply blow up $M$ along its Fitting ideals and
obtain $f: \wfM\to \2M$. { It follows from Theorem
\ref{determ} that
 the homomorphism
$\begin{CD} f^*\sF_0 @>{ f^*\psi}>> f^*\sF_1\end{CD}$ is
diagonalizable.  Now Proposition \ref{usefulFacts} (2) implies the
rest of the statements. }
\end{proof}

As a direct consequence of Proposition \ref{prop:universality},
the stack $\wfM$ is universal.

\begin{prop}\lab{stack:universality} {\rm (Universality.)} { Let the
situation be as in Theorem \ref{resolution-restated} or as in
Corollary \ref{2-term}.}
 If ${ g}: Z \lra M$ is any dominant morphism such that for each $0
 \le i \le n-1$, ${ g}^*\psi: { g}^*\sF_i \lra { g}^*\sF_{i+1}$ is
diagonalizable, then ${ g}$ factors uniquely through ${ f}: \widetilde{M} \lra M$.
\end{prop}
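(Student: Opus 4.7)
The plan is to induct on the length $n$ of the local complex $\sF_0 \xrightarrow{\psi_0} \sF_1 \xrightarrow{} \cdots \xrightarrow{\psi_{n-1}} \sF_n$ used in the proof of Theorem \ref{resolution-restated}, using Proposition \ref{prop:universality} as the engine at each stage. The base case is exactly the corollary case $n=1$ of Corollary \ref{2-term}: the stack $\wfM$ is obtained by blowing up along the Fitting (equivalently, determinantal) ideals of $\psi: \sF_0 \to \sF_1$, so Proposition \ref{prop:universality} applied to the homomorphism $\psi$ immediately gives the unique factorization of any dominant $g$ for which $g^*\psi$ is locally diagonalizable.

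For the inductive step, I recall that in the proof of Theorem \ref{resolution-restated} the stack $\wfM$ was built as the composition $\wfM = M_0 \to M_1 \to \cdots \to M_{n-1} \to M$, where $f_{n-1}: M_{n-1} \to M$ is the iterated determinantal blowup attached to $\psi_{n-1}: \sF_{n-1} \to \sF_n$, and then $M_{n-2} \to M_{n-1}$ is obtained by applying the same procedure to the locally free presentation $f_{n-1}^*\sF_{n-2} \to \ker f_{n-1}^*\psi_{n-1}$ of $\sH^{n-1}(Lf_{n-1}^*\sEb)$. Given a dominant $g: Z \to M$ with all $g^*\psi_i$ diagonalizable, I would first invoke Proposition \ref{prop:universality} on $\psi_{n-1}$ to obtain a unique dominant lift $g_{n-1}: Z \to M_{n-1}$ with $g = f_{n-1} \circ g_{n-1}$.

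Next I would verify that the hypotheses of the inductive step are satisfied on $M_{n-1}$ for the shortened complex $f_{n-1}^*\sF_0 \to \cdots \to f_{n-1}^*\sF_{n-2} \to \ker f_{n-1}^*\psi_{n-1}$. For $i \le n-3$ this is immediate: $g_{n-1}^* f_{n-1}^*\psi_i = g^*\psi_i$ is diagonalizable by hypothesis. The subtle point, and the main technical obstacle, is the terminal map $f_{n-1}^*\sF_{n-2} \to \ker f_{n-1}^*\psi_{n-1}$: I need to see that its pullback by $g_{n-1}$ is diagonalizable. Here I use that $f_{n-1}^*\psi_{n-1}$ is locally diagonalizable, so $\ker f_{n-1}^*\psi_{n-1}$ is locally a trivial direct summand of $f_{n-1}^*\sF_{n-1}$, and pullback preserves this direct sum decomposition, giving $g_{n-1}^*\ker f_{n-1}^*\psi_{n-1} = \ker g^*\psi_{n-1}$. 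Because $\sFb$ is a complex, $g^*\psi_{n-2}$ lands in $\ker g^*\psi_{n-1}$, and its diagonal form (being a block of the diagonal form of $g^*\psi_{n-2}$ into $g^*\sF_{n-1}$) is preserved. Thus $g_{n-1}^*(f_{n-1}^*\sF_{n-2}\to \ker f_{n-1}^*\psi_{n-1})$ is locally diagonalizable.

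With this verified, the inductive hypothesis applied to the shortened complex on $M_{n-1}$ produces a unique dominant factorization $g_{n-1}: Z \to \wfM \to M_{n-1}$, and composing yields the required unique factorization $g: Z \to \wfM \to M$ of $g$ through $f$. Uniqueness at each stage assembles to global uniqueness. The main obstacle is the direct-summand/pullback compatibility at the interface between consecutive blowup stages; once that is secured, the rest is a straightforward assembly of Proposition \ref{prop:universality}.
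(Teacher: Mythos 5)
Your inductive plan is the natural one, and it correctly fills in a step the paper does not spell out: the paper merely asserts that the statement is ``a direct consequence of Proposition~\ref{prop:universality}'' without explaining how to chain that proposition along the tower $M_0\to M_1\to\cdots\to M_{n-1}\to M$, and your proof is exactly the intended expansion. You have also rightly flagged the delicate interface step. Let me just note how to close it cleanly, since it secretly uses the integrality (or at least reducedness) of $Z$ that is standing throughout Theorem~\ref{resolution-restated}. From the diagonal form ${\rm diag}(p_1,\dots,p_m,0,\dots)$ of $g^*\psi_{n-2}$ in a suitable local basis $e_1',\dots$ of $g^*\sF_{n-1}$, the complex condition $\mathrm{Im}\,g^*\psi_{n-2}\subset\ker g^*\psi_{n-1}$ only tells you that $p_i e_i'\in\ker g^*\psi_{n-1}$; you need $Z$ to have no zero divisors to cancel the nonzero $p_i$ and conclude $e_i'\in\ker g^*\psi_{n-1}$. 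Once that is in hand, $\langle e_1',\dots,e_m'\rangle$ is a trivial direct summand of $g^*\sF_{n-1}$ contained in $\ker g^*\psi_{n-1}$, hence (by complementing inside $g^*\sF_{n-1}$ and intersecting with the kernel) a local direct summand of the locally free sheaf $\ker g^*\psi_{n-1}=g_{n-1}^*\ker f_{n-1}^*\psi_{n-1}$; the corestriction of $g^*\psi_{n-2}$ therefore keeps the same diagonal form, which is exactly what the inductive hypothesis needs on $M_{n-1}$. With that detail secured, the rest of your assembly of Proposition~\ref{prop:universality} stage by stage is correct.
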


But, for topological applications, the following base change
property, although weaker, is more convenient to use and easy to
prove. 

\begin{prop}\lab{baseChange} {\rm (Base Change Property.)}
{ Suppose that $M$ is integral.}
 For any  DM stack $N$ and a dominant morphism $g: N \to \2M$
 such that  $\sH^0(Lg\sta \sEb)$ is locally free,
 we can find another
 DM stack $N'$ and a dominant morphism $\tilde{g}: N'
\to \2M$, factoring through $f$ and $g$ (i.e., $\tilde{g}=f \circ
f'= g \circ g'$)
$$
\begin{CD}
N' @>{f'}>> \wfM \\
@V{g'}VV  @V{f}VV \\
 N @>{g}>> \2M
\end{CD}
$$
so that $\sH^0(L\tilde{g}{\sta} \sEb)$ is locally free and is
the pull back of $\sH^0(Lf\sta \sEb)$ and $\sH^0(Lg{\sta}
\sEb)$.
\end{prop}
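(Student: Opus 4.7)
The plan is to construct $N'$ by applying Theorem \ref{resolution-restated} to the derived pullback $Lg\sta\sEb$ on $N$, and then to invoke Proposition \ref{stack:universality} to factor the resulting dominant morphism through $f:\wfM\to\2M$.

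First, replace $N$ by an irreducible component dominating $\2M$ so that the integrality hypothesis of Theorem \ref{resolution-restated} is available. The derived pullback $Lg\sta\sEb$ is again a perfect object locally represented by the complex $g^*\sF^\bullet$ of locally free sheaves supported in non-negative degrees, so Theorem \ref{resolution-restated} produces a dominant birational $g':N'\to N$ such that each $g'^*g^*\psi_i$ is diagonalizable. Setting $\tilde g:=g\circ g'$, which is dominant, the composed differentials $\tilde g^*\psi_i$ are diagonalizable in every degree, so Proposition \ref{stack:universality} furnishes a unique $f':N'\to\wfM$ with $f\circ f'=\tilde g$. This produces the required commutative square.

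Since $\tilde g^*\psi_0$ is diagonalizable, $\sH^0(L\tilde g\sta\sEb)=\ker\tilde g^*\psi_0$ is locally free. For the identification with $f'^*\sH^0(Lf\sta\sEb)$, the diagonalizability of $f^*\psi_0$ makes $\mathrm{Im}\,f^*\psi_0$ locally free, so the short exact sequence $0\to\ker f^*\psi_0\to f^*\sF_0\to\mathrm{Im}\,f^*\psi_0\to 0$ remains exact after pulling back along $f'$, giving the isomorphism. For the analogous identification with $g'^*\sH^0(Lg\sta\sEb)$, I use the hypothesis that $\ker g^*\psi_0$ is locally free, together with the explicit diagonal form of $\tilde g^*\psi_0$ from Theorem \ref{determ}, to compare the two kernels as subsheaves of $\tilde g^*\sF_0$ in each local chart.

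The main obstacle will be the $g'$-side identification: on $N$ we know only that $\ker g^*\psi_0$ is locally free, not that $g^*\psi_0$ is diagonalizable, so $\mathrm{Im}\,g^*\psi_0$ may fail to be locally free and the short exact sequence may not pull back exactly under $g'$. My strategy is to work in the diagonalizing local frames on $N'$ produced by Theorem \ref{determ}, where $\ker\tilde g^*\psi_0$ is realized by an explicit trivial direct summand, and to use the base-change property of Fitting ideals (see \ref{detIdealBaseChange} and \ref{Fitting}) to match this with the pullback of $\ker g^*\psi_0$ from $N$.
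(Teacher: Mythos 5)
Your construction of $N'$ differs genuinely from the paper's: you first replace $N$ by a dominating irreducible component, apply Theorem \ref{resolution-restated} to $Lg\sta\sEb$ to get a blowup $g'\colon N'\to N$ with every $g'^*g^*\psi_i$ diagonalizable, and then invoke Proposition \ref{stack:universality} to obtain $f'\colon N'\to\wfM$ with $f\circ f'=g\circ g'$. The paper instead takes $N'$ to be the closure of the graph of the rational map $N\dashrightarrow\wfM$ inside $N\times_{\2M}\wfM$. Both constructions produce the required commuting square; your route has the advantage that on $N'$ \emph{all} differentials $\tilde g^*\psi_i$ are diagonalizable (not just the bottom one), while the paper's graph closure is lighter and does not require passing to an irreducible component of $N$.

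Your $f$-side argument is correct and more explicit than the paper's: because $f^*\psi_0$ is diagonalizable on integral $\wfM$, the sheaf $\mathrm{Im}\,f^*\psi_0$ is locally free, so $0\to\ker f^*\psi_0\to f^*\sF_0\to\mathrm{Im}\,f^*\psi_0\to 0$ is locally split and stays exact under any further pullback, giving $\sH^0(L\tilde g\sta\sEb)\cong f'^*\sH^0(Lf\sta\sEb)$. (The paper attributes the preserved exactness to the dominance of $f'$, but the real mechanism is the local splitting you identify.)

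The $g$-side is, as you suspect, the genuine gap — and the fix you sketch will not close it. Under the stated hypothesis (only that $\ker g^*\psi_0$ is locally free) the identification $\sH^0(L\tilde g\sta\sEb)\cong g'^*\sH^0(Lg\sta\sEb)$ can fail. Take $\2M=\A^2=\Spec k[x,y]$, $\sEb=[\sO^{\oplus 2}\xrightarrow{(x\ y)}\sO]$ in degrees $0,1$, $N=\2M$, $g=\mathrm{id}$. Then $\ker\psi_0$ is free of rank one, generated by $(y,-x)$, so the hypothesis holds; $\wfM$ is the blowup of $\A^2$ at the origin. In the chart $y=xy'$ one finds $\ker f^*\psi_0$ is generated by $(y',-1)$, whereas the pullback of $(y,-x)$ is $x(y',-1)$; globally $\ker \tilde g^*\psi_0\cong\sO(E)$ while $g'^*\ker g^*\psi_0\cong\sO$, and the natural map between them is the inclusion $\sO\hookrightarrow\sO(E)$, not an isomorphism. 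So comparing the two kernels ``as subsheaves of $\tilde g^*\sF_0$'' in diagonalizing frames exhibits exactly this discrepancy rather than resolving it, and the Fitting-ideal base change cannot repair a failure of kernel to commute with pullback. The paper's own proof dispatches this side with a single ``Similarly,'' inheriting the same gap. To make the $g$-side assertion correct one should strengthen the hypothesis on $g$ to require that each $g^*\psi_i$ be diagonalizable (the hypothesis of Proposition \ref{stack:universality}); then the argument you give for the $f$-side applies verbatim on the $g$-side as well.
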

\begin{proof}
Indeed, we let $N'$ be the closure of the open subset of the graph
of the rational map $$N \lra \wfM$$ that is isomorphic to its
image when projected to either $N'$ and $\wfM$. Then we obtain the
square as in the proposition such that $ \tilde{g}: N' \lra M$ is
a dominant  morphism. The rest conclusions are local. So, locally
we will represent $ Lf^* \sEb$  by the complex of locally free
sheaves
$$
\begin{CD} f^*\sF_0 @>{f^*\psi_0}>>  f^*\sF_1 @>>> \cdots.
\end{CD}$$
This implies that $L\tilde{g}^* \sEb = Lf'^*\circ Lf^*
\sEb$  is locally represented by
$$
\begin{CD} f'^*f^*\sF_0 @>{f'^*f^*\psi_0}>> f'^* f^*\sF_1 @>>> \cdots.
\end{CD}$$
Now, note that  $$
\begin{CD}0 @>>> \ker f^*\psi_0 @>>> f^*\sF_0  @>{f^*\psi_0}>>  f^*\sF_1\end{CD}$$
is exact. Since $f'$ is dominant,
 $$
\begin{CD}0 @>>> f'^*\ker f^*\psi_0 @>>> f'^*f^*\sF_0 @>{f'^*f^*\psi_0}>>  f'^*f^*\sF_1 \end{CD}$$
is also exact. Hence
$$\sH^0(L\tilde{g}{\sta} \sEb) = 
f'^* \sH^0(Lf^*\sEb).$$ Similarly,
$$\sH^0(L\tilde{g}{\sta} \sEb) = 
g'^* \sH^0(Lg^*\sEb).$$
\end{proof}

\begin{say} { Suppose that $M$ is integral}.  One can also routinely verify the following.
\begin{enumerate}
\item Let $\sEb$ be a perfect derived object over an DM stack
$\2M$ and
 ${\sEb}'$ be a perfect derived object over an DM stack $\2M'$, both with vanishing
 $\sH^{i<0}$.
 Let $f: \wfM \lra \2M$ and  $f': \wfM' \lra \2M'$ be given as in
 Theorem \ref{resolution-restated}, then $\sH^0(L(f,f')^* (\sEb
 \boxplus {\sEb}'))$ is locally free over $\wfM \times \wfM'$;
 \item Let $\sEb$ and ${\sEb}'$ be perfect derived objects over a DM stack
$\2M$, both with vanishing
 $\sH^{i<0}$.  Let $f: \wfM \lra \2M$ and  $f': \wfM' \lra \2M$ be
given as in Theorem \ref{resolution-restated}. We let ${\bf M}$ be
the graph of the rational map $\wfM \lra \wfM'$ and $\tilde{f}$
the projection to $\2M$. Then $\sH^0(L\tilde{f}^* (\sEb
 \oplus {\sEb}'))$ is locally free over ${\bf M}$.
\end{enumerate}
\end{say}

\section{The Euler class of Perfect Derived Object}\lab{Euler}

\begin{say}
{ In this subsection, we again make assumption that $M$
(hence also $\wfM$) is integral.} To define the Euler class of the
complex $\sEb$, { we} will use the top Chern class of the locally
free sheaf $\sH^0(Lf\sta \sEb)$. Let $r=\rank \sH^0(\sEb)$. The
Chern class $c_r(\sH^0(Lf\sta \sEb))$ is a homomorphism
$$\begin{CD}c_r(\sH^0(Lf\sta \sEb)): A_*(\wfM) @>>> A_{*-r}(\wfM)\end{CD}.$$
Here $A_* (\wfM)$ is the Chow group of cycles on  $\wfM$. We will
assume that $\rank \sH^0 (\sEb) >0$ and the higher cohomology
sheaves  $\sH^i (\sEb)$ are all torsion for $i >0$. This way, the
Euler class of the complex $\sEb$, as expected, should only
{ depend} on $\sH^0(Lf\sta \sEb)$.
\end{say}

\begin{defi}
 For { any integral}  DM stack $\2M$ of dimension $n$ and { a} derived object
$\sEb$ as in Theorem \ref{resolution-restated}, let $r=\rank
\sH^0(\sEb).$ Then we  define its Euler class ${ \ee}(\sEb) \in A_{n-r}
(\2M)$ { as}: \beq { \ee}(\sEb) := f_*(c_r(\sH^0(Lf\sta \sEb)) \cdot
[\wfM]). \eeq Here
 $A_* (\2M)$ is the Chow group of cycles on  $\2M$.
\end{defi}


\begin{prop}\lab{independent} For any { integral} DM stack $N$ and a  surjective birational morphism $g:
N \to \2M$ such that $\sH^0(Lg\sta \sEb)$ is locally free, we
have
$$g_*(c_r(\sH^0(Lg\sta \sEb)) \cdot [N])=f_*(c_r(\sH^0(Lf\sta \sEb))\cdot [\wfM]).$$
\end{prop}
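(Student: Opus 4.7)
The natural strategy is to reduce both sides to a computation on a common birational cover, supplied by the Base Change Property (Proposition \ref{baseChange}). Apply that proposition to the morphism $g: N \to \2M$ (which is assumed dominant with $\sH^0(Lg\sta\sEb)$ locally free) to produce an integral DM stack $N'$ together with a dominant morphism $\tilde{g}: N' \to \2M$ factoring as $\tilde{g} = f \circ f' = g \circ g'$, where $f': N' \to \wfM$ and $g': N' \to N$ are both birational. The proposition further guarantees that
\[
\sH^0(L\tilde{g}\sta\sEb) \;=\; f'\sta \sH^0(Lf\sta\sEb) \;=\; g'\sta \sH^0(Lg\sta\sEb),
\]
and in particular each of these locally free sheaves has the same rank $r$.

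Next I would invoke two standard facts from intersection theory on integral DM stacks. First, pullback commutes with Chern classes of locally free sheaves, so
\[
c_r\bigl(\sH^0(L\tilde{g}\sta\sEb)\bigr) \;=\; f'\sta c_r\bigl(\sH^0(Lf\sta\sEb)\bigr) \;=\; g'\sta c_r\bigl(\sH^0(Lg\sta\sEb)\bigr).
\]
Second, since $f'$ and $g'$ are birational morphisms between integral DM stacks, $f'_*[N'] = [\wfM]$ and $g'_*[N'] = [N]$. Combining these with the projection formula yields
\[
f'_*\bigl(c_r(\sH^0(L\tilde{g}\sta\sEb)) \cdot [N']\bigr) \;=\; c_r\bigl(\sH^0(Lf\sta\sEb)\bigr) \cdot [\wfM],
\]
and analogously
\[
g'_*\bigl(c_r(\sH^0(L\tilde{g}\sta\sEb)) \cdot [N']\bigr) \;=\; c_r\bigl(\sH^0(Lg\sta\sEb)\bigr) \cdot [N].
\]

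Finally, pushing forward by $f$ the first identity and by $g$ the second, and using the functoriality $\tilde{g}_* = f_* \circ f'_* = g_* \circ g'_*$, both sides of the desired equation are equal to the common value $\tilde{g}_*\bigl(c_r(\sH^0(L\tilde{g}\sta\sEb)) \cdot [N']\bigr)$. This establishes the proposition. The only delicate point is ensuring the two ingredients (compatibility of Chern classes with flat/birational pullback, and the projection formula with $f'_*[N'] = [\wfM]$) are available in the Chow theory of integral DM stacks; these are standard after Vistoli, so I do not anticipate a substantive obstacle beyond citing them.
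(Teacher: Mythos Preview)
Your proof is correct and follows essentially the same route as the paper: both invoke Proposition~\ref{baseChange} to produce the common roof $N'$, identify $\sH^0(L\tilde g\sta\sEb)$ with the pullbacks along $f'$ and $g'$, apply the projection formula together with $f'_*[N']=[\wfM]$ and $g'_*[N']=[N]$, and conclude that both sides equal $\tilde g_*\bigl(c_r(\sH^0(L\tilde g\sta\sEb))\cdot[N']\bigr)$. The only cosmetic difference is that the paper makes explicit that $f'$ is birational \emph{and surjective} (needed for $f'_*[N']=[\wfM]$), which you should also state rather than leave implicit.
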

\begin{proof}
By Proposition \ref{baseChange}, we have a square
$$
\begin{CD}
N' @>{f'}>> \wfM \\
@V{g'}VV  @V{f}VV \\
 N @>{g}>> \2M.
\end{CD}
$$
The diagonal morphism $N' \lra \2M$, denoted $\tilde{g}$, is a surjective birational morphism.
From Proposition \ref{baseChange}, we have that
$$\sH^0(L\tilde{g}{\sta} \sEb)= f'^* \sH^0(Lf^*\sEb)$$ (note that all the
sheaves involved are locally free). Observe that we also have
 $f'_*[N'] =[\wfM]$ because $f'$ is birational and surjective. Hence
 we obtain
$$f'_*( c_r(\sH^0(L\tilde{g}\sta \sEb)) \cdot [N']) = f'_*(f'^*(c_r(\sH^0(Lf\sta \sEb)))\cdot [N'])$$
$$= c_r(\sH^0(Lf\sta \sEb)) \cdot [\wfM].$$
This implies that
$$f_* (c_r(\sH^0(Lf\sta \sEb)) \cdot [\wfM]) =  f_*f'_*( c_r(\sH^0(L\tilde{g}\sta \sEb)) \cdot [N'])$$
$$= \tilde{g}_*(c_r(\sH^0(L\tilde{g}\sta \sEb)) \cdot [N']).$$
Similarly, we get
 $$g_* (c_r(\sH^0(Lg\sta \sEb)) \cdot [N]) =  g_*g'_*( c_r(\sH^0(L\tilde{g}\sta \sEb)) \cdot [N'])$$
$$= \tilde{g}_*(c_r(\sH^0(L\tilde{g}\sta \sEb)) \cdot [N']).$$
This proves the proposition.
\end{proof}

\begin{coro}\lab{Euler} Let $\sEb$ be a perfect object in ${\rm D}^b(M)$.
Suppose $\sH^{i<0}(\sEb)=0$  and
 $\sH^{i>0}(\sEb)$ are torsion.  Then the Euler class
${ \ee}(\sEb)$ is well-defined and { is} independent of the choice of the
bitational surjective morphisms ${ f}: \wfM \lra M$.
\end{coro}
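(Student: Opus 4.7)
The plan is to read this corollary as a formal consequence of the two preceding results: the existence of a resolution in Theorem \ref{resolution-restated}, and the substantive independence statement already proved as Proposition \ref{independent}. The only extra piece of genuine checking that remains is that the integer $r$ used to form the top Chern class is itself resolution-independent.

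First I would invoke Theorem \ref{resolution-restated} to produce at least one surjective birational morphism $f: \wfM \to M$ from an integral DM stack $\wfM$ such that $\sH^0(Lf\sta \sEb)$ is locally free; this shows that $\ee(\sEb) = f_*(c_r(\sH^0(Lf\sta \sEb)) \cdot [\wfM])$ makes sense for at least one $f$, so the class is well-defined in the existential sense. Next, because $f$ is birational, it restricts to an isomorphism over a dense open $U \subset M$; over $f^{-1}(U)$ the derived pullback agrees with the ordinary pullback, so $\sH^0(Lf\sta \sEb)|_{f^{-1}(U)} \cong f\sta (\sH^0(\sEb)|_U)$. This identifies $r := \rank \sH^0(Lf\sta \sEb)$ with the generic rank of $\sH^0(\sEb)$, an invariant of $\sEb$ alone; the torsion hypothesis on $\sH^{i>0}(\sEb)$ enters only to guarantee that this generic rank is the right ``virtual rank'' so that $c_r$ captures the genuine Euler class of $\sEb$.

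Given any other integral DM stack $N$ with a surjective birational morphism $g: N \to M$ such that $\sH^0(Lg\sta \sEb)$ is locally free, Proposition \ref{independent} then delivers
\[
g_*(c_r(\sH^0(Lg\sta \sEb)) \cdot [N]) = f_*(c_r(\sH^0(Lf\sta \sEb)) \cdot [\wfM]),
\]
so the two candidate definitions of $\ee(\sEb)$ agree. There is no real obstacle here, since the substantive content has already been carried out in Proposition \ref{independent}, whose proof dominates both $f$ and $g$ by a common $\tilde g: N' \to M$ via Proposition \ref{baseChange} and then applies the projection formula together with $f'_*[N'] = [\wfM]$ and $g'_*[N'] = [N]$. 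The only extra ingredient the corollary itself requires is the rank comparison above, which ensures that the Chern class on both sides uses the same integer $r$.
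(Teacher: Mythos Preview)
Your proposal is correct and matches the paper's intent: the paper gives no explicit proof for this corollary, treating it as an immediate consequence of Theorem \ref{resolution-restated} (existence of a resolution) together with Proposition \ref{independent} (independence of the resolution). Your additional verification that the integer $r$ is intrinsic to $\sEb$ as the generic rank of $\sH^0(\sEb)$ is a sensible point the paper leaves implicit in its definition of the Euler class.
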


\section{Applications to GW-Invariants}

\def\fMPdgn{\overline{\mathfrak M}_g(\mathbf P,d)}
\def\fXg{{\fMPdgn}}

\begin{say}
Let $\bP$ be a projective space, and let
$\fMPdgn$ be the DM stack of degree $d$ genus $g$ stable maps to $\bP$
as before.
Let \beq(\ff,\pi): \cX\lra \bP\times\fXg \eeq be its
universal family. For any positive integer $k$,  the
derived object
${R} \pi_* (\ff^*\sO_{{{\bP}}}(k))$
in ${ {\rm D}^b}(\fXg)$ is perfect.
\end{say}

\begin{say} One way to see this is to pick two sufficiently large integers $n$ and $n'$
and form $\sL= \ff^*\sO_{\bP}(n) \otimes {
\omega_{\cX/\fXg}^{\otimes n'}}$; then form the tautological homomorphism
$$\cA_0=\pi^* \pi_*
(\ff^*\sO_{\bP}(k) \otimes \sL) \otimes \sL^{-1}\lra \ff\sta\sO_{\bP}(k).
$$
Since $n$ and $n'$ are sufficiently large, it is surjective. We let $\cA_{-1}$ be the kernel of the above
homomorphism.
Then it is easy to see that we have an quasi-isomorphism
$$[R^1\pi\lsta \cA_{-1}\to R^1\pi\lsta \cA_0]={R} \pi_* \ff\sta
\sO_{\bP}(k).
$$
Again since $n$ and $n'$ are sufficiently large,
$$[\sE_0\to\sE_1]\defeq [R^1\pi\lsta \cA_{-1}\to R^1\pi\lsta \cA_0]$$
is a complex of locally free sheaves. This proves that
${R} \pi_*\ff\sta\sO_{\bP}(k)$ is perfect.
\end{say}

{
\begin{defi}\lab{prime-comp}
 Assume $d > 2g-2$. We let $\fM_g({\bf P},d)_0\sub\fXg$ be the
open subset consisting of stable morphisms with irreducible domain curves.
We define the {\sl primary} part $\fXg'$ of $\fXg$ to be the closure
of $\fM_g({\bf P},d)_0$ in $\fXg$.
\end{defi}

The open subset   $\fM_g({\bf P},d)_0$ is non-empty, smooth and has the expected dimension.
Thus  $\fXg'$ is generically smooth and of the expected dimension.

\begin{rema}\lab{primaryComp}  Some remarks on the primary components are in order.
Let $C$ be an irreducible curve of genus $g$. A map $u: C \lra \bP$ is given by
$(m+1)$-sections $u_0, \cdots, u_m \in \Gamma(u^*\sO_{\bP}(1))$, where $m=\dim\bP$. We may assume that
$u^*\sO_{\bP}(1)=\sO_C(D)$ for some effective divisor $D$. Assume that $d >g$. Then there are general divisors
on the curve $C$. If $D$ is general, by the geometric version
of Riemann-Roch theorem, $\dim \Ga(\sO_C(D))= d+1-g \ge 2$. From here, one checks that the dimension of
$\fXg$ at such a map is
$$3g-3+ d + (d+1-g)m =d(m+1) + (m-3)(1-g),$$ as expected.
When $d > 2g-2$, all divisors are general. If $C$ is reducible and has more than one irreducible components
(of positive genera, for instance) that are not contracted by the stable morphsim, then conjecturally they do not contribute the GW number of quintic Calabi-Yaus.
When $g < d < 2g-1$, there are special divisors over the curve $C$. Such divisors give rise to $\Ga(\sO_C(D))$ with
$\dim \Ga(\sO_C(D))>d+1-g \ge 2$. Hence they may produce a component of $\fXg$ with dimension larger than expected.
When $0<d\le g$, it may happen that none of the irreducible components of $\fXg$ have the expected dimension.
\end{rema}

\begin{say}\lab{1impliesk}\lab{modularBlowup}
We now assume $d>2g-2$.
We apply the constructions in the previous sections to  the complex $R \pi_* \ff^* \sO_{{{\bP}}}(k)$
restricted to $\fXg'$. Let $\ff'$ be the restriction of $\ff$ to $\fXg'$. By the vanishing of high cohomology,
$R^1 \pi_*\ff^{\prime \ast} \sO_{\bP}(k))$
is trivial on a dense open subset of $\fXg'$. Then
by \S \ref{Euler}, we have a well-defined {\it modular Euler class}
\beq\lab{ModularEulerClass}\ee(R \pi_*( \ff^{\prime\ast}\sO_{{{\bP}}}(k))) \in A_* (\fXg').\eeq
We denote this number by $N_{g,d}'$, (cf. Definition \ref{prime-comp}).
\end{say}

\begin{say} When $g=0$, $N_{0,d}'=N_{0,d}$, the usual Gromov-Witten
number of the quintic $X$; when $g=1$, $N_{1,d}'$  is the
reduced genus-1 GW-invariants (see \cite{LZ, CL}).
\end{say}

We believe these numbers are the reduced GW-invarains of guintics
conjectured by Li-Zinger, (cf. Conj. \ref{conj}).

}

\end{document}